  \theoremstyle{plain}
  \newtheorem{thm}{Theorem}[section]
  \newtheorem{lem}[thm]{Lemma}
  \newtheorem{pro}[thm]{Proposition}
  \newtheorem{cor}[thm]{Corollary}
  \newtheorem{thmABC}{Theorem}
  \newtheorem*{con*}{Conjecture}
  \newtheorem{lemma}[thm]{Lemma}
  \newtheorem{con}[thmABC]{Conjecture}
  \theoremstyle{remark}
  \newtheorem{rem}[thm]{Remark}
  \newtheorem{exm}[thm]{Example}
  \newtheorem*{acknowledgements}{Acknowledgements}
  \numberwithin{equation}{section}
  \numberwithin{table}{section}
  \newcommand{\N}{\mathbb{N}}
  \newcommand{\Z}{\mathbb{Z}}
  \newcommand{\Q}{\mathbb{Q}}
  \newcommand{\C}{\mathbb{C}}
  \newcommand{\R}{\mathbb{R}}
  \renewcommand{\epsilon}{\varepsilon}
  \renewcommand{\phi}{\varphi}
  \renewcommand{\theta}{\vartheta}
  \newcommand{\rarr}{\rightarrow}
  \DeclareMathOperator{\sd}{sd}
  \DeclareMathOperator{\Hilb}{Hilb}
  \DeclareMathOperator{\red}{red}
  \DeclareMathOperator{\maj}{maj}
  \DeclareMathOperator{\des}{des}
  \DeclareMathOperator{\Des}{Des}
  \DeclareMathOperator{\real}{Re}
  \def \dd {\mathsf{d}}
  \def \FF {\mathsf{F}}
  \def \OO {\mathsf{O}}
  \def \pp {\mathsf{p}}
  \def \bfm {{\bf m}}
  \def \bfone {{\bf 1}}
  \def \bftwo {{\bf 2}}
  \def \bfthree {{\bf 3}}
  \def \bfm {{\bf m}}
  \def \wt {\widetilde}
  \def \Zp  {\mathbb{Z}_p}
  \author{Angela Carnevale} \address{Fakult\"at f\"ur Mathematik,
    Universit\"at Bielefeld, D-33501 Bielefeld, Germany}
  \email{acarneva1@math.uni-bielefeld.de, C.Voll.98@cantab.net}
  \author{Christopher Voll} 
  \keywords{Orbit Dirichlet series, multiset permutations, Carlitz's
    $q$-Eulerian polynomials, Hadamard products of rational generating
    functions, Igusa functions, local functional equations, natural
    boundaries}
\subjclass[2000]{37C30, 37P35, 30B50, 11M41, 05A15, 05A19}
\begin{document}
   \title{Orbit Dirichlet series and multiset permutations} \date{\today}

\begin{abstract} 
  We study Dirichlet series enumerating orbits of Cartesian products
  of maps whose orbit distributions are modelled on the distributions
  of finite index subgroups of free abelian groups of finite rank. We
  interpret Euler factors of such orbit Dirichlet series in terms of
  generating polynomials for statistics on multiset permutations,
  viz.\ descent and major index, generalizing Carlitz's $q$-Eulerian
  polynomials.

  We give two main applications of this combinatorial
  interpretation. Firstly, we establish local functional equations for
  the Euler factors of the orbit Dirichlet series under
  consideration. Secondly, we determine these (global) Dirichlet
  series' abscissae of convergence and establish some meromorphic
  continuation beyond these abscissae. As a corollary, we describe the
  asymptotics of the relevant orbit growth sequences. For Cartesian
  products of more than two maps we establish a natural boundary for
  meromorphic continuation. For products of two maps, we prove the
  existence of such a natural boundary subject to a combinatorial
  conjecture.
\end{abstract}
  \maketitle

\thispagestyle{empty}
\section{Introduction and main results}
Let $X$ be a space and $T:X\rarr X$ a map. A \emph{closed orbit of
  length $n\in\N$} is a set of the form
$$\{x,T(x),T^2(x),\dots,T^n(x)=x\}$$
of cardinality $n$. Assume that the number $\OO_T(n)$ of closed orbits
of length $n$ under $T$ is finite for all $n\in\N$. The \emph{orbit
  Dirichlet series} of $T$ is the Dirichlet generating
series $$\dd_T(s) = \sum_{n=1}^\infty \OO_T(n)n^{-s},$$ where $s$ is a
complex variable.

If $T$ has a single closed orbit of each length $n$, then $\dd_T(s)$
is just Riemann's zeta function $\zeta(s) = \sum_{n=1}^\infty n^{-s}$.
If, more generally, $T=T_r$ is such that the number of closed orbits
of length $n$ equals the number $a_n(\Z ^r)$ of subgroups of $\Z^r$ of
index $n$ for all $n\in\N$, then $\dd_{T_r}(s)$ is the well known
Dirichlet generating series (or ``zeta function'') $\zeta_{\Z^r}(s)$
enumerating subgroups of finite index of the free abelian group $\Z^r$
of rank~$r$. More precisely,
  \begin{equation}\label{zetafree}
    \dd_{T_r}(s) = \zeta_{\Z^r}(s)= \sum_{n=1}^\infty a_n(\Z^r)n^{-s}
    =\prod_{i=0}^{r-1}\zeta(s-i);
  \end{equation}
  cf.\ \cite[Proposition~1.1]{GSS/88}.  

Let $\lambda= (\lambda_1,\dots,\lambda_m)\in \N ^m$ with $\lambda_1
\geq \dots \geq \lambda_m\geq 1$ be a partition of
$N=\sum_{i=1}^m\lambda_i$. For $i=1,\dots,m$, let $T_{\lambda_i}$
be a map as above with $\dd_{T_{\lambda_i}}(s) =
\zeta_{\Z^{\lambda_i}}(s)$. We write
$${T}_{\lambda} ={ T}_{\lambda_1} \times \dots \times { T}_{\lambda_m}$$
for the Cartesian product of the maps $T_{\lambda_i}$. Clearly, the
arithmetic function $n \mapsto \OO_{ T_\lambda}(n)$ is
multiplicative, whence
  \begin{equation*}\label{equ:euler}
  \dd_{T_\lambda}(s) = \prod_{p \textup{ prime}} \dd_{ T_\lambda,p}(s),
  \end{equation*}
  where, for a prime~$p$,
  $$\dd_{ T_\lambda,p}(s) = \sum_{k=0}^\infty \OO_{T_\lambda}(p^k)p^{-ks}.$$

  We remark that maps $ T_{\lambda_i}$ as above exist, {even if
    they are required to be smooth. Indeed,} by a result of Windsor,
  any sequence $(a_n)_{n\geq 1}$ of nonnegative integers may be
  realized as the sequence $(\OO_T(n))_{n\geq 1}$ for a suitable
  $C^{\infty}$-diffeomorphism $T$ of the $2$-dimensional torus
  $X=\mathbb{T}^2 = (\mathbb{R}/\mathbb{Z})^2$; cf.~\cite{Windsor/08}.
  
  In this paper we prove and exploit combinatorial formulae for the
  Euler factors of orbit Dirichlet series of the form
  $\dd_{T_\lambda}(s)$ above using generating polynomials for
  statistics on multiset permutations.

Our first main result is phrased in terms of the bivariate polynomial
$C_\lambda\in\Z[x,q]$ giving the joint distribution of the statistics
$\des$ and $\maj$ on~$S_{\lambda}$, the set of multiset permutations
of the multiset
$\{\underbrace{\bfone,\dots\bfone}_{\lambda_1},\underbrace{\bftwo,\dots\bftwo}_{\lambda_2}\ldots,\underbrace{\bfm,\dots,\bfm}_{\lambda_m}\}$. See
Section~\ref{sec:multip} for precise definitions.

\begin{thm}\label{thm:partition}
 Let $\lambda=(\lambda_1,\ldots,\lambda_m)$ be a partition
 of~$N$. Then
 \begin{equation}\label{equ:partition} 
  \dd_{T_{\lambda}}(s) = \prod_{p \textup{ prime}}
  \frac{C_{\lambda}(p^{-1-s},p)}{\prod_{i=1}^{N}(1-p^{i-1-s})}=
  \prod_{p \textup{ prime}}\frac{\sum_{w \in
      S_{\lambda}}p^{(-1-s)\des(w)+\maj(w)}}{\prod_{i=1}^{N}(1-p^{i-1-s})}.
 \end{equation}
\end{thm}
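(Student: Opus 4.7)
The plan is to compute the Euler factor $\dd_{T_\lambda,p}(s)$ as a sum over tuples of integer sequences, collapse it by a max-to-partial-sum trick, and then invoke a classical multiset generalisation of Carlitz's $q$-Eulerian identity to identify the numerator as $C_\lambda$. Throughout, I set $Y=p^{-1-s}$ and $q=p$.

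The starting point is the closed-orbit arithmetic for Cartesian products: if $O_i$ is a closed $T_{\lambda_i}$-orbit of length $n_i$, then $O_1\times\cdots\times O_m$ is a disjoint union of $\prod_i n_i/\mathrm{lcm}(n_i)$ closed $T_\lambda$-orbits of length $\mathrm{lcm}(n_i)$. At $n=p^k$ with $n_i=p^{k_i}$ this gives
$$\OO_{T_\lambda}(p^k)=\sum_{{\bf k}\in\N_0^m,\;\max_ik_i=k}p^{\sum_ik_i-k}\prod_{i=1}^m\OO_{T_{\lambda_i}}(p^{k_i}).$$
Expanding the geometric product $\zeta_{\Z^{\lambda_i},p}(s)$ writes $\OO_{T_{\lambda_i}}(p^{k_i})$ as $\sum_{|{\bf c}^{(i)}|=k_i}p^{\sum_b(b-1)c^{(i)}_b}$ over ${\bf c}^{(i)}\in\N_0^{\lambda_i}$. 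Substituting into the display above and using the identity $|{\bf c}^{(i)}|+\sum_b(b-1)c^{(i)}_b=\sum_bb\,c^{(i)}_b$ to consolidate exponents, the Euler factor becomes
$$\dd_{T_\lambda,p}(s)=\sum_{({\bf c}^{(i)})\in\prod_i\N_0^{\lambda_i}}Y^{\max_i|{\bf c}^{(i)}|}\,q^{\sum_{i,b}b\,c^{(i)}_b}.$$

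Each inner sum over ${\bf c}^{(i)}$ with fixed $|{\bf c}^{(i)}|=k_i$ equals $q^{k_i}\binom{k_i+\lambda_i-1}{k_i}_q$ by the $q$-binomial theorem. Writing $\sum_{\max_ik_i=k}=\sum_{\max\le k}-\sum_{\max\le k-1}$ and applying the partial-sum identity $\sum_{j=0}^Kq^j\binom{j+r-1}{j}_q=\binom{K+r}{r}_q$ inside each of the two factorised products, a telescoping in $Y$ yields
$$\dd_{T_\lambda,p}(s)=(1-Y)\sum_{k\ge0}Y^k\prod_{i=1}^m\binom{k+\lambda_i}{\lambda_i}_q.$$

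The decisive final step is the multiset generalisation of Carlitz's $q$-Eulerian identity,
$$\sum_{k\ge0}Y^k\prod_{i=1}^m\binom{k+\lambda_i}{\lambda_i}_q=\frac{C_\lambda(Y,q)}{\prod_{j=0}^N(1-Yq^j)},$$
which specialises to Carlitz's identity when $\lambda=(1^n)$. Multiplying through by the telescoping factor $(1-Y)$ cancels the $j=0$ term in the denominator, leaving exactly $\prod_{j=1}^N(1-Yq^j)^{-1}=\prod_{j=1}^N(1-p^{j-1-s})^{-1}$; taking the Euler product over primes then gives the theorem. The genuine obstacle is this last combinatorial identity, which can be established via Stanley's $P$-partition theory: enumerate pairs $(w,\beta)$ with $w\in S_\lambda$ and $0\le\beta_1\le\cdots\le\beta_N$ a weakly increasing sequence satisfying $\beta_j<\beta_{j+1}$ at every descent of $w$, weighted by $Y^{\beta_N}q^{\beta_1+\cdots+\beta_N}$; fibring over fixed $\beta_N=k$ recovers the Gaussian-binomial product on the left, while summing over the linear extensions of the underlying multiset antichain produces the des-maj polynomial $C_\lambda$.
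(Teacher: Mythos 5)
Your proof is correct and reaches the same key intermediate expression
\begin{equation*}
  \dd_{T_\lambda,p}(s)\;=\;\bigl(1-p^{-1-s}\bigr)\sum_{k\ge0}\Bigl(\prod_{i=1}^m\binom{\lambda_i+k}{k}_p\Bigr)\,p^{(-1-s)k}
\end{equation*}
as the paper, but by a genuinely different route in the first half. The paper passes through the periodic-point counts $\FF_T(n)=|\{x\mid T^n x=x\}|$: these are \emph{multiplicative} across Cartesian factors, $\FF_{T\times T'}(n)=\FF_T(n)\FF_{T'}(n)$, and orbit counts are recovered via M\"obius inversion, $\OO_T(n)=\tfrac1n\sum_{d\mid n}\mu(n/d)\FF_T(d)$. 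Combined with the fact $\FF_{T_r}(p^k)=\binom{r+k}{k}_p$ (the paper's Corollary~\ref{lem:fta}, which is precisely your partial-sum identity $\sum_{j=0}^{k}q^j\binom{j+r-1}{j}_q=\binom{r+k}{k}_q$), the telescoping factor $(1-t/p)$ drops out in one line. You instead stay at the level of orbits, using the lcm decomposition of a product of closed orbits, expand each $\OO_{T_{\lambda_i}}(p^{k_i})$ through the geometric product form of $\zeta_{\Z_p^{\lambda_i}}$, consolidate exponents, and then extract the telescoping by the difference-of-partial-maxima trick. Your route is more computational but makes explicit where the weight $p^{\sum_i k_i-k}$ comes from; the paper's $\FF/\OO$ duality (also used in the cited Pakapongpun--Ward paper) is a bit slicker. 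Both proofs conclude identically by invoking MacMahon's identity~\eqref{eq:mmpart}; your closing sketch of a $P$-partition proof of that identity is a pleasant addition, though the paper simply cites MacMahon for it.
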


Key to Theorem~\ref{thm:partition} is an identity, essentially due to
MacMahon, for Hadamard products of certain rational generating
functions. {It is well known that if $A(x)=\sum_{k=0}^\infty
  a_kx^k$ and $B(x) = \sum_{k=0}^\infty b_kx^k\in\Q(x)$ are rational
  functions, then their Hadamard product $(A\ast B)(x) =
  \sum_{k=0}^\infty a_kb_k x^k$ is also a rational function;
  cf.\ \cite[Proposition~4.2.5]{Stanley/12}. Given rational functions
  $A_1(x),\dots,A_m(x)$, we write $\Asterisk_{i=1}^m A_i(x)$ for their
  Hadamard product $A_1(x)\ast \dots \ast
  A_m(x)$.}

\begin{pro}[MacMahon; cf.\ Remark~\ref{rem:macmahon}]\label{pro:macmahon}
Let $\lambda=(\lambda_1,\ldots,\lambda_m)$ be a partition of~$N$. Then
$$\Asterisk_{i=1}^m \prod_{k=0}^{\lambda_i} \frac{1}{1-q^{k}x} =
  \frac{C_{\lambda}(x,q)}{\prod_{i=0}^N(1-xq^{i})} \in \Q(x,q).$$
\end{pro}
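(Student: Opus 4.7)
The plan is to expand both sides as power series in $x$, match coefficients, and reduce the proposition to a classical identity of MacMahon that fits neatly into Stanley's $P$-partition framework for disjoint unions of chains.

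First, the $q$-binomial theorem gives $\prod_{k=0}^{\lambda_i}(1-q^{k}x)^{-1}=\sum_{j\ge 0}\binom{j+\lambda_i}{\lambda_i}_{q}x^j$, so the Hadamard product on the left of the proposition equals $\sum_{j\ge 0}\bigl(\prod_{i=1}^m\binom{j+\lambda_i}{\lambda_i}_q\bigr)x^j$. On the right, expanding $1/\prod_{i=0}^N(1-xq^i)=\sum_{k\ge 0}\binom{k+N}{N}_{q}x^k$ and distributing $C_\lambda(x,q)$, the coefficient of $x^j$ becomes $\sum_{w\in S_\lambda}q^{\maj(w)}\binom{j-\des(w)+N}{N}_q$ (with the convention that $q$-binomials with negative top argument vanish). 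Matching coefficients reduces the proposition to the purely combinatorial identity
\begin{equation*}
\prod_{i=1}^{m}\binom{j+\lambda_i}{\lambda_i}_{q}\;=\;\sum_{w\in S_\lambda}q^{\maj(w)}\binom{j-\des(w)+N}{N}_{q}\qquad(j\ge 0).
\end{equation*}

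To prove this identity I would argue bijectively. The left hand side enumerates, weighted by $q^{\sum_i|\mu^{(i)}|}$, $m$-tuples $(\mu^{(1)},\dots,\mu^{(m)})$ in which $\mu^{(i)}$ is a partition inside a $\lambda_i\times j$ box. Merge all entries into a single weakly decreasing master sequence $j\ge b_1\ge\dots\ge b_N\ge 0$ with ties broken stably (preserving intra-row order and giving priority to smaller row labels among equal cross-row values), and record the row labels as a word $w\in S_\lambda$. This is Stanley's fundamental theorem of $(P,\omega)$-partitions applied to the disjoint union of chains of lengths $\lambda_1,\dots,\lambda_m$ under the natural labelling: the master sequences compatible with a prescribed $w$ are exactly those satisfying $b_k>b_{k+1}$ at every $k\in\Des(w)$ and $b_k\ge b_{k+1}$ elsewhere. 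The substitution $\bar b_k=b_k-|\{d\in\Des(w):d\ge k\}|$ then converts these into arbitrary weakly decreasing length-$N$ sequences in $\{0,\dots,j-\des(w)\}$, with a uniform weight decrease equal to $\sum_{d\in\Des(w)}d=\maj(w)$. The contribution of $w$ is therefore $q^{\maj(w)}\binom{j-\des(w)+N}{N}_q$, as required.

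The main obstacle is the verification underlying the bijective description: one must check that the stable tie-breaking rule makes row-merging a bijection onto pairs $(w,b)$ with $b$ compatible with $w$ in the above sense, and that the shift $\bar b$ registers precisely $\maj(w)$ in the weight accounting. Both are standard $P$-partition-style verifications, but the second relies on the telescoping identity $\sum_k|\{d\in\Des(w):d\ge k\}|=\maj(w)$ rather than the superficially similar $N\des(w)-\maj(w)$, so the choice between the increasing and the decreasing conventions must be made consistently. Once these points are in hand, summing the displayed identity against $x^j$ and reassembling via the $q$-binomial theorem recovers $C_\lambda(x,q)/\prod_{i=0}^N(1-xq^i)$, completing the proof.
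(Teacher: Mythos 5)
Your proof is correct and begins exactly as the paper does: expand each factor $\prod_{k=0}^{\lambda_i}(1-q^kx)^{-1}=\sum_j\binom{j+\lambda_i}{\lambda_i}_q x^j$ by the $q$-binomial theorem (the paper's equation \eqref{equ:subgroups.abel}), observe that the Hadamard product simply multiplies these coefficients, and thereby reduce the Proposition to the coefficient-wise identity
$\prod_{i=1}^{m}\binom{j+\lambda_i}{\lambda_i}_{q}=\sum_{w\in S_\lambda}q^{\maj(w)}\binom{j-\des(w)+N}{N}_{q}$,
which is MacMahon's~\eqref{eq:mmpart}. The paper stops there and cites MacMahon (Remark~\ref{rem:macmahon}), whereas you go further and actually prove \eqref{eq:mmpart} via the $(P,\omega)$-partition bijection for a disjoint union of naturally labelled chains: merge the $m$ column-strict fillings into one weakly decreasing length-$N$ sequence, break ties by giving smaller labels priority so that equalities $b_k=b_{k+1}$ force $w_k\le w_{k+1}$, and shift by $\bar b_k=b_k-|\{d\in\Des(w):d\ge k\}|$ to land in $\{0,\dots,j-\des(w)\}$ with a uniform weight drop of $\sum_{d\in\Des(w)}d=\maj(w)$. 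All the delicate points you flag (stability of the merge, direction of the shift, that the total shift telescopes to $\maj(w)$ rather than $N\des(w)-\maj(w)$) are handled consistently, so your argument is a self-contained proof of the underlying MacMahon identity rather than a citation of it. The trade-off is clear: the paper's route is shorter because it leans on the classical reference, while yours is longer but makes the Proposition independent of external input; both rest on the same Hadamard-product reduction.
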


We call a partition of the form $\lambda=(r,\ldots,r)=(r^m)$ a
\emph{rectangle}. We
use Theorem~\ref{thm:partition} to prove that the Euler factors
in~\eqref{equ:partition} satisfy {certain} functional equations
upon inversion of the prime if and only if the partition $\lambda$ is
a rectangle. {We denote by $T_r^{\times m}$ the $m$-fold Cartesian
  power $T_r \times \dots \times T_r$.}
\begin{thm}\label{cor:funeq} 
Let $p$ be a prime. For all $r,m\in\N$,
\begin{equation}\label{equ:fun}
\dd_{T_r^{\times m},p}(s)\vert_{p \rarr p^{-1}} = (-1)^{rm}
p^{m\binom{r+1}{2}-r-rs} \dd_{{ T}_r^{\times m},p}(s).
\end{equation}
If $\lambda$ is not a rectangle, then $\dd_{ T_\lambda,p}(s)$ does
not satisfy a functional equation of the form
\begin{equation}\label{equ:funeq.dir}
\dd_{ T_\lambda,p}(s)\vert_{p \rarr p^{-1}} = \pm p^{d_1-d_2s}
\dd_{ T_{\lambda},p}(s)
\end{equation} for $d_1,d_2\in\N_0$.
\end{thm}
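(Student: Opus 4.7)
The plan is to reduce both claims to a (bi)palindromic symmetry of the polynomial $C_\lambda(x,q)$ and verify that this symmetry holds precisely when $\lambda$ is a rectangle. With $x=p^{-1-s}$ and $q=p$, the substitution $p\mapsto p^{-1}$ corresponds to $(x,q)\mapsto(x^{-1},q^{-1})$, and a routine computation gives $\prod_{i=1}^N(1-x^{-1}q^{-i}) = (-1)^N x^{-N} q^{-\binom{N+1}{2}}\prod_{i=1}^N(1-xq^i)$. Using Theorem~\ref{thm:partition}, an identity of the shape $\dd_{T_\lambda,p}(s)|_{p\to p^{-1}} = \pm p^{d_1-d_2 s}\dd_{T_\lambda,p}(s)$ is then equivalent to
\[ C_\lambda(x^{-1},q^{-1}) = \pm(-1)^N x^{d_2-N} q^{d_1+d_2-\binom{N+1}{2}} C_\lambda(x,q). \qquad(\dagger)\]
Since $C_\lambda$ has non-negative integer coefficients, the sign factor $\pm(-1)^N$ is forced to equal $+1$, so $(\dagger)$ is a genuine bipalindromic symmetry.

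For the rectangular case $\lambda=(r^m)$ (so $N=rm$), I aim to establish the bipalindromic identity
\[ C_{(r^m)}(x^{-1},q^{-1}) = x^{-r(m-1)} q^{-r^2\binom{m}{2}} C_{(r^m)}(x,q), \qquad(\ddagger) \]
which fits $(\dagger)$ with $d_2=r$ and $d_1=m\binom{r+1}{2}-r$ and recovers \eqref{equ:fun} after using $\binom{rm+1}{2}-r^2\binom{m}{2}=m\binom{r+1}{2}$. The plan for $(\ddagger)$ is to start from Proposition~\ref{pro:macmahon}, which in the rectangular case reads $\sum_{k\geq 0}\binom{k+r}{r}_q^m x^k = C_{(r^m)}(x,q)/\prod_{i=0}^N(1-xq^i)$. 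Substituting $q\mapsto q^{-1}$ and using the Gaussian binomial palindromy $\binom{k+r}{r}_q = q^{rk}\binom{k+r}{r}_{q^{-1}}$, the two denominators cancel and one obtains the $q$-reciprocity $C_{(r^m)}(x,q^{-1})=C_{(r^m)}(xq^{-N},q)$. I would then establish the complementary $x$-reciprocity $C_{(r^m)}(z,q)= z^{r(m-1)} q^{r^2\binom{m}{2}} C_{(r^m)}(z^{-1}q^{-N},q)$ via a bijection on $S_{(r^m)}$ exploiting the complement symmetry $k\mapsto m+1-k$, which preserves the underlying multiset exactly when $\lambda$ is a rectangle; combining the two reciprocities yields $(\ddagger)$.

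For non-rectangular $\lambda$: suppose $(\dagger)$ held. The $x^0 q^0$ coefficient of $C_\lambda(x,q)$ equals $1$, the sorted word being the only element of $S_\lambda$ with $\des=\maj=0$, so bipalindromy obliges the coefficient of $x^{N-\lambda_1}$ (the top $x$-degree of $C_\lambda$) to be a monic monomial in $q$. I plan to refute this by reading off the columns of the Young diagram of $\lambda$: any concatenation of the columns, each read bottom-to-top as a strictly decreasing block, yields a permutation with the maximum $N-\lambda_1$ descents, the non-descent positions being precisely the column boundaries. Concatenating the columns in left-to-right order versus right-to-left order produces two distinct max-descent permutations whenever the column-height sequence fails to be a palindrome—equivalently, whenever $\lambda$ is not a rectangle—and a direct computation of the two major indices shows that they differ, so the coefficient of $x^{N-\lambda_1}$ has at least two distinct terms in $q$, contradicting the monomial condition.

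The principal technical hurdle is proving the $x$-reciprocity half of $(\ddagger)$ in the rectangular case; while the $q$-reciprocity drops out of Proposition~\ref{pro:macmahon} and the Gaussian binomial identity, the companion $x$-reciprocity requires a separate argument, most naturally a combinatorial bijection on $S_{(r^m)}$ that records the reflection symmetry available for rectangular shapes. The non-rectangular case, by contrast, reduces cleanly to the combinatorial check on column readings of the Young diagram of $\lambda$.
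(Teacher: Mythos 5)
Your overall strategy matches the paper's: Theorem~\ref{thm:partition} reduces the local functional equation for $\dd_{T_\lambda,p}$ to a bipalindromic identity for $C_\lambda(x,q)$, which the paper isolates as Proposition~\ref{pro:funeq.C.rectangle}. Your reduction, your sign analysis, your normalization of exponents, and your refutation in the non-rectangular case (multiple words attain the maximal descent number $N-\lambda_1$, so the top $x$-coefficient cannot be a single monic $q$-monomial) are all correct and close in spirit to what the paper does, where Lemma~\ref{lem:unique.longest} supplies the fact that $C_\lambda(x,1)$ fails to be monic. Your derivation of the $q$-reciprocity $C_{(r^m)}(x,q^{-1})=C_{(r^m)}(xq^{-N},q)$ from Proposition~\ref{pro:macmahon} together with $\binom{k+r}{r}_q=q^{rk}\binom{k+r}{r}_{q^{-1}}$ is a legitimate alternative to the paper's route, which instead proves the coefficient-wise identity $C^{(i)}_{r,m}(q^{-1})=q^{-irm}C^{(i)}_{r,m}(q)$ via the reverse-complement involution $\phantom{}^\circ$ of Lemma~\ref{lem:inv}; the two are equivalent.

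The genuine gap is in your proposed proof of the $x$-reciprocity
\[ C_{(r^m)}(z,q)=z^{r(m-1)}q^{r^2\binom{m}{2}}C_{(r^m)}(z^{-1}q^{-N},q), \]
which you say follows ``via a bijection on $S_{(r^m)}$ exploiting the complement symmetry $k\mapsto m+1-k$.'' For $r>1$ this bijection does not do what you need. Writing $w^{\mathrm c}$ for the letterwise complement of $w$, the map swaps descents and ascents but \emph{fixes} the positions $i$ with $w_i=w_{i+1}$, so
\[ \des(w^{\mathrm c})=\mathrm{asc}(w)=(N-1)-\des(w)-\mathrm{lev}(w), \]
where $\mathrm{lev}(w)$ counts tie positions. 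For $r>1$, $\mathrm{lev}$ is not constant on $S_{(r^m)}$ — e.g.\ in $S_{2,2}$, $\bfone\bfone\bftwo\bftwo$ has two ties while $\bfone\bftwo\bfone\bftwo$ has none — so $\des(w^{\mathrm c})$ is not a function of $\des(w)$, and in particular $\des(w^{\mathrm c})\neq r(m-1)-\des(w)$ in general (already $w=\bfone\bfone\bftwo\bftwo$ has $\des=0$ but $\des(w^{\mathrm c})=1\neq 2$). So this does not establish the $x$-reciprocity; it only works for $r=1$. The relation you need is precisely MacMahon's \cite[\textsection 461, Vol.~2]{MacMahon/16}, which the paper simply cites inside the proof of Proposition~\ref{pro:funeq.C.rectangle}; you should do the same, or prove it by another route — for instance, applying the reciprocity theorem for rational power series to Proposition~\ref{pro:macmahon} together with the identity $\binom{-1-j}{r}_q=(-1)^rq^{-rj-\binom{r+1}{2}}\binom{j+r}{r}_q$ yields exactly the $x$-reciprocity. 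With that repair, your argument is sound and equivalent to the paper's.
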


We prove Theorems~\ref{thm:partition} and \ref{cor:funeq} in
Section~\ref{sec:proof}. The functional equations \eqref{equ:fun} are
deduced from the combinatorial properties of the polynomials
$C_\lambda$ studied in Section~\ref{sec:multip}.

In Section~\ref{sec:analytic} we collect a number of corollaries about
the analytic properties of the orbit Dirichlet series that we
study. In particular, we determine the abscissa of convergence of
$\dd_{T_{\lambda}}(s)$ and establish meromorphic continuation beyond
this abscissa. A standard application of a Tauberian theorem then
yields an asymptotic result on the growth of the (partial sums of the)
numbers $\OO_{T_{\lambda}}(n)$; see Theorem~\ref{analytic}.

If $\lambda=(r)$ or $\lambda=(1,1)$, then $\dd_{T_{\lambda}}(s)$ has
meromorphic continuation to the whole complex plane. In contrast, for
partitions with more than two parts{---}pertaining to Cartesian products of more than
two maps{---}or two parts of equal length greater than $1$ we
establish a natural boundary for meromorphic continuation
at~$\left(\sum_{i=1}^m\lambda_i\right)-2$. For partitions with two
parts of unequal lengths, we establish such a natural boundary subject
to a combinatorial conjecture on some special values of the
polynomials $C_{\lambda}$ discussed in Section~\ref{subsec:gen.poly};
see Theorem~\ref{thm:nat.bound}.

In Section~\ref{sec:one} we concentrate on partitions of the form
$\lambda=(1^m)$, pertaining to the $m$-th Cartesian power of a map
with orbit Dirichlet series~$\dd_{T_1}(s)=\zeta(s)$.  Orbit
Dirichlet series of products of such maps were previously studied, for
very special cases, in \cite{PakapongpunWard/14}. The result
\cite[Theorem~4.1]{PakapongpunWard/14}, for instance, is the special
case $\lambda=(1,1,1)$ of our Theorem~\ref{thm:nat.bound}; see also
Section~\ref{sec:one}. For partitions of the form $\lambda=(1^m)$ the
polynomial $C_\lambda$ is the well-studied Carlitz's $q$-Eulerian polynomial,
enumerating the elements of the symmetric group by the statistics
$\des$ and~$\maj$.  We also observe that in this case the Euler
factors of \eqref{equ:partition} are {Igusa functions} in the
terminology of \cite{SV1/15}.

In Section~\ref{sec:red} we consider ``reduced'' orbit Dirichlet
series and note some connections with the theory of $h$-vectors of
simplicial complexes.

Dirichlet generating series are widely used in enumerative problems
arising in algebra, geo\-metry, and number theory. Orbit Dirichlet
series as defined above are studied for instance in~\cite{EMST/10}. 
Local functional equations such as the ones established in
Theorem~\ref{cor:funeq} occur frequently in the theory of zeta
functions of {groups, rings, and modules; see, for example,
  \cite{Voll/10, Voll/17}}. In the cases where they are explained
combinatorially, they may often be traced back to functional equations
satisfied by Igusa-type functions; see, for instance,
\cite{KlopschVoll/09, SV1/15}.

\subsection{Notation}
We write $\N=\{1,2,\dots\}$ and, for a subset $I\subseteq \N$, set
$I_0 = I \cup \{0\}$. Given $n\in\N$, we write $[n]=\{1,\dots,n\}$ and
$n-I= \{n-i\mid i\in I\}$.  For $I=\{i_1,\ldots,i_r\}\subseteq [n-1]$
with $i_1 \leq \dots \leq i_r$ we let
$$\binom{n}{I}=\frac{n!}{i_1! (i_2 - i_1)!\cdots(n-i_r)!} $$
denote the multinomial coefficient. Given $a \geq b\in\N_0$ and a
variable $q$, we write
  $$\binom{a}{b}_q = \prod_{i=1}^b \frac{q^{a-b+i}-1}{q^i-1}\in\Z[q]$$
  for the $q$-binomial coefficient.

  \section{Permutations of multisets}\label{sec:multip}
  In this section we set up notation and prove some basic facts
  regarding multiset permutations (see also \cite[Section
  5.1.2]{Knuth/98}).

  \subsection{Multiset permutations} \label{subsec:multiset.permut}
  Let $\lambda = (\lambda_1,\dots,\lambda_m)$ be a partition of $N
  =\sum_{i=1}^m\lambda_i$.  The \emph{multiset} $$A_{\lambda}=
  \{\underbrace{\bfone,\dots\bfone}_{\lambda_1},\underbrace{\bftwo,\dots\bftwo}_{\lambda_2}\ldots,\underbrace{\bfm,\dots,\bfm}_{\lambda_m}\}$$
  comprises $\lambda_1$ (indistinguishable) copies of the ``letter''
  $\bfone$, $\lambda_2$ copies of the ``letter'' $\bftwo$ etc. A
  \emph{multiset permutation} (or \emph{multipermutation}) on
  $A_{\lambda}$ is a word $w= w_1 \dots w_N$ formed with all the $N$
  elements of $A_{\lambda}$. We denote {by} $S_{\lambda}$ the set
  of all multiset permutations on~$A_\lambda$.  If
  $\lambda=(1,\dots,1)=(1^m)$, then we recover the set $S_{m}$ of
  permutations of the set $A_{(1^m)} = \{\bfone,\bftwo,\dots,\bfm\}$.

  In general, $S_{\lambda}$ lacks a natural group structure, but a
  number of classical statistics on the Coxeter group $S_m$ have
  analogues for general partitions.  For instance, one defines the
  \emph{descent set} $\Des(w)$ of $w = \prod_{i=1}^N w_i\in S_\lambda$
  as
  $$\Des(w)=\{i \in [N-1] \mid w_i>w_{i+1}\},$$ where, of course, one
  uses the ``natural'' ordering $\bfm > \dots > \bftwo > \bfone$ on
  the letters of $A$.  The \emph{descent} and \emph{major index}
  statistics on $S_\lambda$ are defined via 
$$\des(w)=|\Des(w)| \quad\textup{ and }\quad\maj(w)=\sum_{i\in\Des(w)} i.$$ The
``trivial word'' $\bfone^{\lambda_1}\bftwo^{\lambda_2}\ldots
\bfm^{\lambda_m}$ is clearly the unique element in $S_{\lambda}$ with
empty descent set.

  \begin{exm}
    For $\lambda= (3,3,1)$, the element $w=\bfone \bftwo
    \bfone\bftwo\bfthree \bfone\bftwo\in S_{\lambda}$ has $\Des(w) =
    \{2,5\}$, whence $\des(w)=2$ and $\maj(w)=7$.
  \end{exm}

\begin{rem}
  One may, more generally, consider multisets indexed by compositions,
  rather than partitions, of~$N$. As we are interested in the joint
  distribution of $\des$ and $\maj$, the order of the parts does not
  matter to us (cf.~\eqref{eq:mmpart} below), so we only consider
  partitions.
\end{rem}

Recall that we call $\lambda$ a rectangle if $\lambda_1 = \dots =
\lambda_m=r$, say, viz.\ $\lambda = (r^m)$. In this case, we write
$S_{r,m}$ for $S_{(r^m)}$. If, moreover, $r=1$, then we write $S_m$
for~$S_{1,m} = S_{(1^m)}$, the (set underlying the) symmetric group of
degree $m$.

\begin{lem} \label{lem:unique.longest} The partition $\lambda$ is a
  rectangle if and only if there exists a unique element of
  $S_\lambda$ at which $\des$ attains its maximum. If $\lambda=(r^m)$,
  then both $\des$ and $\maj$ take their maximal values at $w_0 =
  \left(\bfm \dots \bftwo \bfone\right)^r$ of $S_{\lambda}$, viz.\
  $\des(w_0)=r(m-1)$ and $\maj(w_0)=r^2\binom{m}{2}$.
\end{lem}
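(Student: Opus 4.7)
My plan splits the proof into two parts: first, an ``iff'' characterisation of rectangles via uniqueness of the $\des$-maximiser, and second, the explicit computations of $\des(w_0)$, $\maj(w_0)$, and the maximality claim for $\maj$ in the rectangular case.

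For the characterisation, I would begin with the uniform bound $\des(w) \leq N - \lambda_1$ for every $w\in S_\lambda$. A descent at position $i$ forces $w_i > w_{i+1} \geq \bfone$, hence $w_i \neq \bfone$, so $\des(w) \leq \#\{i \in [N-1] : w_i \neq \bfone\}$. Since at most one of the $\lambda_1$ copies of $\bfone$ can occupy position $N$, this count is bounded above by $N - \lambda_1$. Equality requires $w_N = \bfone$ and $w_i > w_{i+1}$ at every $i \in [N-1]$ with $w_i \neq \bfone$. Writing $w = B_0\bfone B_1 \bfone \dots B_{\lambda_1 - 1}\bfone$ (with blocks $B_k$ possibly empty), each $B_k$ consists of non-$\bfone$ letters and the descent condition forces $B_k$ to be strictly decreasing; equivalently, $B_k$ is a subset of $\{\bftwo, \dots, \bfm\}$ listed in decreasing order. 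Counting the number of ways to distribute the $\lambda_j$ copies of $\bfj$ among the $\lambda_1$ blocks (at most one per block) yields $\prod_{j=2}^m \binom{\lambda_1}{\lambda_j}$ maximisers, a product which equals $1$ precisely when $\lambda_j = \lambda_1$ for every $j \in [m]$, i.e.\ when $\lambda$ is a rectangle.

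For the rectangular case $\lambda = (r^m)$, the analysis above singles out the unique $\des$-maximiser as $w_0 = (\bfm \dots \bftwo \bfone)^r$, whose descent set is $[N-1] \setminus \{m, 2m, \dots, (r-1)m\}$. A direct count then gives $\des(w_0) = r(m-1)$ and $\maj(w_0) = \binom{rm}{2} - m\binom{r}{2}$, which simplifies to $r^2\binom{m}{2}$. To verify that $w_0$ also maximises $\maj$ on $S_\lambda$, I would invoke MacMahon's classical identity $\sum_{w \in S_\lambda} q^{\maj(w)} = \binom{N}{\lambda_1, \dots, \lambda_m}_q$; the top $q$-degree of the $q$-multinomial equals $\binom{N}{2} - \sum_{i=1}^m \binom{\lambda_i}{2}$, and specialising to $\lambda = (r^m)$ yields precisely $r^2\binom{m}{2} = \maj(w_0)$.

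The main obstacle is the equality analysis in the second paragraph: one has to argue carefully that forcing every non-$\bfone$ position to be a descent really does decompose $w$ into independent strictly decreasing blocks, and that the subsequent count of admissible distributions of the letters $\bftwo, \dots, \bfm$ decouples across those blocks. The remaining ingredients are elementary arithmetic and a standard appeal to the $q$-multinomial.
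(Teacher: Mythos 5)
Your proof is correct, and it follows the same basic idea as the paper's (analyse the $\des$-maximising words by looking at how the copies of $\bfone$ chop the word into strictly decreasing runs), but it is carried out more carefully and closes two small gaps in the paper's argument.

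The paper works with the dual partition $\mu=(\mu_1,\dots,\mu_s)$, $s=\lambda_1$, and asserts that $\des$ is maximised \emph{precisely} at the words obtained from $w_0=(\boldsymbol{\mu_1}\dots\bfone)\cdots(\boldsymbol{\mu_s}\dots\bfone)$ by permuting the $s$ blocks. Your enumeration shows that this description undercounts the maximisers in general: for $\lambda=(3,2,1)$, say, the word $\bftwo\bfone\bftwo\bfone\bfthree\bfone$ has $\des=3=N-\lambda_1$ but its blocks $\bftwo\bfone,\bftwo\bfone,\bfthree\bfone$ are not a permutation of $\bfthree\bftwo\bfone,\bftwo\bfone,\bfone$ (the paper's description gives $6$ maximisers; your count $\binom{3}{2}\binom{3}{1}=9$ is the correct one). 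The imprecision is harmless for the lemma — permuting unequal blocks already yields $\geq 2$ maximisers when $\lambda$ is not a rectangle, and in the rectangle case all counts collapse to $1$ — but your decomposition $w=B_0\bfone B_1\bfone\cdots B_{\lambda_1-1}\bfone$ with each $B_k$ a subset of $\{\bftwo,\dots,\bfm\}$ in decreasing order gives the clean closed form $\prod_{j=2}^m\binom{\lambda_1}{\lambda_j}$, from which both directions of the equivalence drop out immediately. Your second improvement: the lemma asserts that $\maj$ is \emph{maximal} at $w_0$ in the rectangle case, but the paper's proof only computes $\maj(w_0)$ and does not argue maximality. Your appeal to MacMahon's identity $\sum_{w\in S_\lambda}q^{\maj(w)}=\binom{N}{\lambda_1,\dots,\lambda_m}_q$ and the degree computation $\deg_q\binom{N}{\lambda}_q=\binom{N}{2}-\sum_i\binom{\lambda_i}{2}=r^2\binom{m}{2}$ supplies exactly the missing comparison. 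All the arithmetic, including the simplification $\binom{rm}{2}-m\binom{r}{2}=r^2\binom{m}{2}$, checks out.
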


\begin{proof}
 Set $s=\lambda_1$ and write $\mu=(\mu_1,\dots,\mu_s)$ for the dual
 partition of $\lambda$. Thus $m = \mu_1 \geq \dots \geq \mu_s \geq
 1$. The statistic $\des(w)$ attains its maximal value $\sum_{\sigma
   =1}^s (\mu_{\sigma}-1)$ precisely at the word
 $$w_0= (\boldsymbol{\mu_1}\dots \bftwo
 \bfone)(\boldsymbol{\mu_2}\dots \bftwo \bfone) \dots
 (\boldsymbol{\mu_{s}}\dots \bftwo \bfone)$$ and all the elements of
 $S_\lambda$ obtained from $w_0$ by permuting the $s$ ``blocks''
 $\boldsymbol{\mu_{\sigma}}\dots \bftwo \bfone$, $\sigma\in[s]$. All
 these elements coincide if and only if $\lambda$ is a rectangle, say
 $\lambda=(r^m)$. In this case, $\mu = (m^r)$ and $w_0$ satisfies
 $\des(w_0)= r(m-1)$ and $\maj(w_0)=\binom{rm}{2} - m\binom{r}{2} =
 r^2\binom{m}{2}$.
\end{proof}

We define the involution
\begin{align}\label{def:map.circ}
   \phantom{x}^\circ: S_{r,m} \rarr S_{r,m},\quad w = w_1\dots w_N
   \mapsto w^\circ = ({\bf m+1} - w_N) \dots ({\bf
     m+1}-w_1)\end{align} which ``reverses and inverts'' the elements
of $S_{r,m}$. 

\begin{rem}
  If $r=1$, then $w_0\in S_m$ is the ``longest element'' (with respect
  to Coxeter length) and $^\circ$ is just conjugation by~$w_0$.
\end{rem}
We collect some properties of this involution in the following
elementary and easy lemma, whose proof we omit.
\begin{lem}\label{lem:inv}
For all
  $w\in S_{r,m}$ the following hold.
  \begin{enumerate}
  \item $\Des(w^\circ) = rm - \Des(w)$,
   \item $\des(w^\circ) = \des(w)$,
   \item $\maj(w^\circ) = \des(w)rm - \maj(w)$.
  \end{enumerate}\end{lem}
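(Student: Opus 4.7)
The plan is to unfold the definition of the involution coordinatewise and track what it does to descent positions. Write $N=rm$, so that for $w = w_1\cdots w_N \in S_{r,m}$, we have $w^\circ = w^\circ_1\cdots w^\circ_N$ with
\[
w^\circ_j = (m+1) - w_{N+1-j}, \qquad j\in [N].
\]
(Note that $w^\circ\in S_{r,m}$ because $\lambda=(r^m)$ is a rectangle: the letter-inversion $k\mapsto m+1-k$ preserves the multiset $A_\lambda$ precisely in this case.) The single observation driving the entire proof is that the letter-inversion $k\mapsto m+1-k$ reverses the total order on $\{\bfone,\dots,\bfm\}$, while the position-reversal $j\mapsto N+1-j$ swaps ``$j$-th'' with ``$(j+1)$-th'' entries up to a shift.

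For part (1), I would compute directly: $j\in\Des(w^\circ)$ iff $w^\circ_j > w^\circ_{j+1}$ iff $(m+1)-w_{N+1-j} > (m+1)-w_{N-j}$ iff $w_{N-j} > w_{N+1-j}$ iff $N-j \in \Des(w)$. Hence $\Des(w^\circ) = \{N-i : i\in \Des(w)\} = rm - \Des(w)$, which is exactly (1). Part (2) is immediate: the map $i\mapsto N-i$ is a bijection from $[N-1]$ to itself, so restricting it gives a bijection $\Des(w)\to\Des(w^\circ)$, whence $\des(w^\circ) = \des(w)$.

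For part (3), I would substitute the description of $\Des(w^\circ)$ from (1) into the definition of $\maj$:
\[
\maj(w^\circ) = \sum_{j\in\Des(w^\circ)} j = \sum_{i\in \Des(w)} (N-i) = N\,\des(w) - \maj(w) = rm\,\des(w) - \maj(w).
\]
There is no real obstacle here; the lemma is essentially a formal consequence of the two symmetries (order-reversal on letters and on positions) that define $^\circ$. The only point worth verifying carefully is the well-definedness of $^\circ$ as a map $S_{r,m}\to S_{r,m}$, which uses the rectangular shape of $\lambda$ in an essential way, and the small index bookkeeping in the equivalence $j\in\Des(w^\circ)\Leftrightarrow N-j\in\Des(w)$ (making sure the indices stay in $[N-1]$).
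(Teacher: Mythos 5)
Your proof is correct, and it supplies exactly the straightforward coordinatewise computation that the paper declines to write out (the paper simply labels the lemma ``elementary and easy'' and omits the proof). The index bookkeeping $j\in\Des(w^\circ)\Leftrightarrow N-j\in\Des(w)$ is verified correctly, the bijection $i\mapsto N-i$ on $[N-1]$ gives (2), and the substitution for (3) is right; your side remark that well-definedness of $^\circ$ on $S_\lambda$ forces $\lambda_k=\lambda_{m+1-k}$ for all $k$ (hence, for a partition, the rectangular shape) is a genuine point that the paper leaves implicit.
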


  \subsection{Generating polynomials}\label{subsec:gen.poly}
  Let $x$ and $q$ be variables and set
  \begin{equation}\label{equ:C.general.part}
    C_{\lambda}(x,q)=\sum_{w \in
      S_{\lambda}}x^{\des(w)}q^{\maj(w)}\in \Z [x,q].
  \end{equation}
  A result of MacMahon (\cite[\textsection 462, Vol.~2, Ch.~IV,
  Sect.~IX]{MacMahon/16}) states that, in $\Q(x,q) \cap
  \Q(q)\llbracket x \rrbracket$,
  \begin{equation}\label{eq:mmpart}
    \sum_{k=0}^\infty\left(\prod_{i=1}^m\binom{\lambda_i +k}{k}_q\right) x^k=\frac{C_{\lambda}(x,q)}{\prod_{i=0}^{N}(1-xq^i)}.
  \end{equation}

  If $\lambda=(r^m)$ is a rectangle, then we write $C_{r,m}$ for
  $C_{(r^m)}$. If, moreover, $r=1$, then we write $C_m$ for
  $C_{1,m}=C_{(1^m)}$. In this case, \eqref{equ:C.general.part}
  defines Carlitz's $q$-Eulerian polynomial
  (\cite{Carlitz/54,Carlitz/75})
  $$C_{m}(x,q)=\sum_{w \in S_m}x^{\des(w)}q^{\maj(w)}\in \Z [x,q].$$
Note that
\begin{equation}\label{equ:eulerian}
C_m(x,1) = \sum_{w\in S_m}x^{\des(w)} = A_m(x)/x\in\Z[x],
\end{equation} where
  $A_m(x)$ is the $m$-th Eulerian polynomial;
cf.\ \cite[Section~1.4]{Stanley/12}.

  \begin{exm}
    For $\lambda=(2,1)$, $S_\lambda=\{\bfone \bfone \bftwo, \bfone
    \bftwo \bfone, \bftwo \bfone \bfone \}$, so
  $$C_{(2,1)}(x,q)=1+xq+xq^2.$$
  For $r=m=2$,
    $S_{2,2} = \{\bfone \bfone \bftwo \bftwo,\,\bfone \bftwo \bftwo
    \bfone,\,\bfone \bftwo \bfone \bftwo,\,\bftwo \bfone \bfone
    \bftwo,\, \bftwo \bftwo \bfone \bfone ,\,\bftwo \bfone \bftwo
    \bfone\}$, whence
  $$C_{2,2}(x,q) = 1 + x q + 2x q^2 + x q^3 + x^2 q^4.$$ 
  Finally, for $m=3$ resp.\ $m=4$,
  \begin{align}
   C_3(x,q)&=1+2xq+2xq^2+x^2q^3, \textup{ resp.\ } \nonumber\\
    C_4(x,q)&=(1+xq^2)(1+3xq+4xq^2  +3xq^3+x^2q^4).\nonumber
  \end{align}
  \end{exm}
  
  To establish some of the analytic properties of $\dd_{
    T_{\lambda}}(s)$ in Section~\ref{sec:analytic}, we need a
  description of the unitary factors of the bivariate polynomials
  $C_{\lambda}(x,q)$. Here, a polynomial $f\in\Z[x,q]$ is called
  \emph{unitary} if it is {nonconstant}
  and there exists $F\in\Z[Y]$ such that all complex roots of $F$ have
  absolute value $1$ and $f(x,q) = F(x^aq^b)$ for some $a,b\in\N_0$.

  As $\maj(w) > 0 $ implies $\des(w) > 0$ for all $w\in S_\lambda$,
  unitary factors of $C_\lambda (x,q)\in\Z[x,q]$ give rise to unitary
  factors of
  \begin{equation}\label{equ:C.descent}
    C_\lambda (x,1) = \sum_{w\in S_{\lambda}} x^{\des(w)}\in\Z[x].
    \end{equation}

The following Lemma describes the occurrence of unitary factors of
Carlitz's $q$-Eulerian polynomials, pertaining to partitions of the form
$\lambda = (1^m)$.

\begin{lem}\label{lem:factor}
  Carlitz's $q$-Eulerian polynomial $C_m(x,q)\in\Z[x,q]$ has a unitary
  factor if and only if $m$ is even. If $m=2k$, then
 \begin{equation*}\label{evenfact}
    C_m(x,q)=(1+x q^k) C_m ' (x,q),
 \end{equation*}
 where $C_m'(x,q)=\sum_{w\in S_m^{\{k\}}} x^{\des(w)}q^{\maj(w)}$ and
 $S_m^{\{k\}}$ is the parabolic quotient $S_m^{\{k\}}=\{w \in S_m \mid
 \Des(w)\subseteq [m-1]\setminus \{k\}\}$. Moreover, $C'_m(x,q)$ has
 no unitary factor.
\end{lem}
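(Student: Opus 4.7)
The plan is to prove three claims: (a) when $m=2k$, the polynomial $1+xq^k$ divides $C_m(x,q)$; (b) the quotient coincides with $\sum_{w\in S_m^{\{k\}}}x^{\des(w)}q^{\maj(w)}$; and (c) apart from this single factor in the even case, $C_m(x,q)$ admits no unitary factor.

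For (a), I would specialize MacMahon's identity \eqref{eq:mmpart} for $\lambda=(1^{2k})$, namely $\sum_{n\ge 0}[n+1]_q^{2k}x^n=C_{2k}(x,q)/\prod_{i=0}^{2k}(1-xq^i)$, at $x=-q^{-k}$. Since the denominator $\prod_{i=0}^{2k}(1+q^{i-k})$ is a nonzero rational function in $q$, it suffices to show the left-hand side vanishes. Expanding $[n+1]_q^{2k}$ by the binomial theorem and summing the inner geometric series in $n$ reduces the left-hand side, up to $(1-q)^{-2k}$, to $S:=\sum_{j=0}^{2k}(-1)^j\binom{2k}{j}\frac{q^j}{1+q^{j-k}}$. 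Averaging $j$ with $2k-j$ via the identity $\frac{q^j}{1+q^{j-k}}+\frac{q^{2k-j}}{1+q^{k-j}}=q^k$ gives $2S=q^k(1-1)^{2k}=0$.

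For (b), I would start from the parabolic decomposition $S_{2k}=S_{2k}^{\{k\}}\sqcup S_{2k}^{\{k\}}s_k$ and try to show each coset $\{v,vs_k\}$ contributes $(1+xq^k)x^{\des(v)}q^{\maj(v)}$ to $C_{2k}(x,q)$. Because the change in $(\des,\maj)$ under right multiplication by $s_k=(k,k+1)$ depends on all four of $v_{k-1},v_k,v_{k+1},v_{k+2}$, the naive pairing does not match the required $(\des+1,\maj+k)$ shift uniformly; a refined bijection (aggregating cosets or invoking a further symmetry, perhaps the involution $^\circ$ from \eqref{def:map.circ}) or a direct generating-function derivation paralleling MacMahon's, restricted to the parabolic quotient, will be needed. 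I expect this to be the main technical obstacle.

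For (c), I rely on the observation preceding the lemma: any unitary factor $f(x,q)=F(x^aq^b)$ of $C_m(x,q)$ specializes at $q=1$ to a unitary factor $F(x^a)$ of $C_m(x,1)=A_m(x)$, the Eulerian polynomial. By Frobenius's classical theorem, the nonzero roots of $A_m$ are real, negative and simple; the only such root of modulus~$1$ is $-1$. This forces $a=1$ and $F(Y)=Y+1$, so every unitary factor of $C_m$ has the form $1+xq^b$, and simplicity permits at most one such $b$. Palindromy of $A_m$ (degree $m-1$) gives $A_m(-1)=0$ iff $m$ is even, so such a factor exists precisely for $m=2k$; part~(a) then pins down $b=k$. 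Finally, $C_m'$ admits no unitary factor: any such factor would divide $C_m$, hence equal $1+xq^k$, yielding $(1+xq^k)^2\mid C_m$ and contradicting the simplicity of $-1$ as a root of $A_m$.
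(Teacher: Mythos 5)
Your skepticism in part (b) is well placed, and in fact goes further than you realize: the combinatorial identity you would need there,
\[
\frac{C_m(x,q)}{1+xq^k}=\sum_{w\in S_m^{\{k\}}}x^{\des(w)}q^{\maj(w)},
\]
is false as stated, so no refined bijection can rescue it. Take $m=4$, $k=2$: the paper's own example gives $C_4(x,q)/(1+xq^2)=1+3xq+4xq^2+3xq^3+x^2q^4$, whereas $S_4^{\{2\}}=\{w\in S_4\mid 2\notin\Des(w)\}$ consists of twelve permutations with descent sets $\emptyset$ (one), $\{1\}$ (three), $\{3\}$ (three), $\{1,3\}$ (five), so that $\sum_{w\in S_4^{\{2\}}}x^{\des(w)}q^{\maj(w)}=1+3xq+3xq^3+5x^2q^4$. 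Correspondingly, the step in the paper's proof asserting
\[
\sum_{w\in S_m\setminus S_m^{\{k\}}}\prod_{\substack{j\in\Des(w)\\j\neq k}}xq^j=\sum_{w\in S_m^{\{k\}}}\prod_{j\in\Des(w)}xq^j
\]
is incorrect: $w\mapsto ww_0$ is indeed a bijection between the two index sets, but it complements and reflects the descent set rather than merely deleting $k$, and the two sides already disagree for $m=4$, namely $5+3xq+3xq^3+x^2q^4$ versus $1+3xq+3xq^3+5x^2q^4$. The parts of the lemma that the rest of the paper actually uses --- the characterization of when a unitary factor exists, the divisibility $(1+xq^k)\mid C_m(x,q)$, and the absence of any further unitary factor in the quotient --- are all correct; only the explicit description of $C_m'$ as a sum over $S_m^{\{k\}}$ is not.

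Your parts (a) and (c) supply exactly those correct pieces. Part (a) is a genuinely different (and sound) route to the divisibility than the paper's attempted bijection: specialize MacMahon's identity at $x=-q^{-k}$. Two small rigor points. First, the power series $\sum_{n\ge 0}[n+1]_q^{2k}x^n$ does not converge at $x=-q^{-k}$ in any regime of $q$, so one should expand $[n+1]_q^{2k}$ and resum to obtain the rational-function form $(q-1)^{-2k}\sum_{j=0}^{2k}(-1)^{2k-j}\binom{2k}{j}\frac{q^j}{1-q^jx}$ \emph{before} substituting; your pairing $\frac{q^j}{1+q^{j-k}}+\frac{q^{2k-j}}{1+q^{k-j}}=q^k$ then gives $2S=q^k(1-1)^{2k}=0$ as claimed. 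Second, having shown $C_m(-q^{-k},q)=0$ in $\Q(q)$, Gauss's lemma is needed to pass from $(1+xq^k)\mid C_m(x,q)$ in $\Q(q)[x]$ to divisibility in $\Z[x,q]$. Part (c) is essentially the paper's Frobenius argument and is fine; one should just note that the case $a=0$ of a putative unitary factor $F(q^b)$ is ruled out because $C_m(0,q)=1$.
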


\begin{proof}
  By a result of Frobenius (\cite[p.~829]{Frobenius/1910}), the roots
  of $C_m(x,1)$ are all real, simple, and negative; moreover, $-1$ is
  a root only for even~$m$. Thus the $q$-Eulerian polynomials
  $C_m(x,q)$ have unitary factors only for even $m$. Let $m=2k$ and
  denote by $w_0$ the longest element of~$S_m$. The map $S_m^{\{k\}}
  \rarr S_m \setminus S_m^{\{ k \}}, w \mapsto w w_0$, is obviously a
  bijection. Hence
 \begin{align*}
    C_m(x,q)&=\sum_{w\in S_m}x^{\des(w)}q^{\maj(w)}=\sum_{w\in S_m}
    \prod_{j\in\Des(w)} xq^j\\ &=\sum_{w\in S_m^{\{k\}}}
    \prod_{j\in\Des(w)} xq^j+ \sum_{w\in S_m \setminus S_m^{\{k\}}}
    \prod_{j\in\Des(w)} xq^j \\ &=\sum_{w\in S_m^{\{k\}}}
    \prod_{j\in\Des(w)} xq^j+ x q^k \sum_{w\in S_m \setminus
      S_m^{\{k\}}} \prod_{\substack{ j\in\Des(w)\\ j \neq k}}
    xq^j\\ &=\sum_{w\in S_m^{\{k\}}} \prod_{j\in\Des(w)} xq^j+ x q^k
    \sum_{w\in S_m^{\{k\}}} \prod_{ j\in\Des(w)} xq^j\\ &=(1+x
    q^k)\sum_{w\in S_m^{\{k\}}} \prod_{j\in\Des(w)} xq^j = (1+x q^k)
    C_m'(x,q).
 \end{align*}
 The non-existence of unitary factors of $C_m'(x,q)$ follows again
 from the simplicity of $x=-1$ as a root of $C_m(x,1)$.
\end{proof}

\begin{rem}
 The polynomials $C_m(x,1)$, for $m$ odd, resp.\ $C_m(x,1)/(1+x)$, for
 $m$ even, have been conjectured to be irreducible for all $m$; for a
 discussion and proofs of irreducibility in various special cases,
 see~\cite{Heidrich/84}.
\end{rem}

\begin{rem}Consider again a general partition $\lambda$. Generalizing the result
of Frobenius referred to in the proof of Lemma~\ref{lem:factor}, all
zeros of the polynomials $C_\lambda(x,1)$ are real, simple, and
negative; see \cite[Corollary~2]{Simion/84}. By the above discussion,
a necessary condition for the occurrence of unitary factors of
$C_{\lambda}(x,q)$ is hence that $C_\lambda(-1,1)=0$. We remark that
in the case that $\lambda=(r^m)$ is a rectangle, $C_\lambda (-1,1)$
is, up to a sign, the so-called Charney-Davis quantity of the graded
poset of the disjoint union of $m$ labelled chains of length $r$;
see~\cite{RSW/03}.
\end{rem}
For our applications in Section~\ref{sec:analytic} we require
statements about the (non-)existence of unitary factors of
$C_{\lambda}(x,q)$ principally in the case $m=2$, on which we focus
for most of the reminder of this section. Recall that
$C_{(\lambda_1,\lambda_2)}(x,1)$ is the descent polynomial of
$S_{(\lambda_1,\lambda_2)}$; cf.~\eqref{equ:C.descent}. In
\cite[\textsection 144-146, Vol.~1, Ch.~II, Sect.~IV]{MacMahon/16}
MacMahon gives three proofs of the following lemma.

\begin{lemma}[MacMahon]\label{lem:lambda} 
 Let $\lambda=(\lambda_1,\lambda_2)$. Then
$$C_{(\lambda_1,\lambda_2)}(x,1) = \sum_{j=0}^{\lambda_2}  \binom{\lambda_1}{j}\binom{\lambda_2}{j}x^j.$$
\end{lemma}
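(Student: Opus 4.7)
My plan is to establish this identity combinatorially by enumerating the words in $S_{(\lambda_1,\lambda_2)}$ according to their descent number. Since $S_{(\lambda_1,\lambda_2)}$ consists of binary words with $\lambda_1$ copies of $\mathbf{1}$ and $\lambda_2$ copies of $\mathbf{2}$, and since descents occur precisely at positions $i$ where $w_i=\mathbf{2}$ and $w_{i+1}=\mathbf{1}$, I would first reformulate the problem in terms of maximal runs: any such word decomposes uniquely as an alternating sequence of $\mathbf{1}$-runs and $\mathbf{2}$-runs, and $\des(w)$ equals the number of $\mathbf{2}$-runs that are immediately followed by a $\mathbf{1}$-run.

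The next step is to stratify words with exactly $j$ descents according to their first and last letters, giving four disjoint cases. Using the standard fact that the number of ordered compositions of $n$ into $p$ positive parts is $\binom{n-1}{p-1}$, one obtains:
\begin{align*}
\text{start } \mathbf{1},\text{end } \mathbf{1}:&\quad \tbinom{\lambda_1-1}{j}\tbinom{\lambda_2-1}{j-1}, & \text{start } \mathbf{1},\text{end } \mathbf{2}:&\quad \tbinom{\lambda_1-1}{j}\tbinom{\lambda_2-1}{j}, \\
\text{start } \mathbf{2},\text{end } \mathbf{1}:&\quad \tbinom{\lambda_1-1}{j-1}\tbinom{\lambda_2-1}{j-1}, & \text{start } \mathbf{2},\text{end } \mathbf{2}:&\quad \tbinom{\lambda_1-1}{j-1}\tbinom{\lambda_2-1}{j}.
\end{align*}
The totals in each case are derived by noting, for instance, that a word starting and ending with $\mathbf{1}$ must have $j+1$ $\mathbf{1}$-runs and $j$ $\mathbf{2}$-runs to realize exactly $j$ descents, and similarly for the other cases. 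Summing the four contributions factors as
\begin{equation*}
\Bigl[\tbinom{\lambda_1-1}{j}+\tbinom{\lambda_1-1}{j-1}\Bigr]\Bigl[\tbinom{\lambda_2-1}{j}+\tbinom{\lambda_2-1}{j-1}\Bigr],
\end{equation*}
which collapses to $\binom{\lambda_1}{j}\binom{\lambda_2}{j}$ by two applications of Pascal's rule.

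No serious obstacle is expected; the only bookkeeping subtlety is the boundary case $j=0$, in which only the sorted word $\mathbf{1}^{\lambda_1}\mathbf{2}^{\lambda_2}$ contributes (matching the term $\binom{\lambda_1}{0}\binom{\lambda_2}{0}=1$), provided one adopts the convention $\binom{n}{-1}=0$ so that only the second case above is nonzero at $j=0$. As a sanity check, one could alternatively deduce the lemma from Proposition~\ref{pro:macmahon} by specializing $q=1$: the Hadamard product reduces to $\sum_{k\ge 0}\binom{\lambda_1+k}{k}\binom{\lambda_2+k}{k}x^k$, so the claim becomes equivalent, after multiplying by $(1-x)^{\lambda_1+\lambda_2+1}$ and extracting coefficients, to a Chu--Vandermonde type convolution, verifiable by routine manipulation. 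I prefer the combinatorial route since it is self-contained and avoids having to recognize the auxiliary hypergeometric identity.
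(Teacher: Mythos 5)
Your combinatorial argument is correct, and I have checked each of the four run-decomposition counts and the Pascal-rule factorization; they all work out, including the $j=0$ boundary case. However, the paper itself does not supply a proof of this lemma: it defers entirely to MacMahon, citing three distinct proofs in \emph{Combinatory Analysis} (\S\S 144--146). So there is no single ``paper's proof'' to compare against — the paper is content to quote the result.

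That said, a brief comparison is still worthwhile. Your run-decomposition argument (stratify by the boundary letters, count compositions of $\lambda_1$ and $\lambda_2$ into the forced numbers of positive parts, factor the sum via Pascal) is a clean, self-contained bijective proof of the kind MacMahon would have recognized, though it is not obviously identical to any one of his three. The paper later makes use specifically of MacMahon's \emph{third} proof (\S146), which produces a statistic-preserving correspondence showing that the number of words with $k$ descents equals the number of words with $k$ occurrences of $\mathbf 2$ among the first $\lambda_1$ positions — a fact the paper needs for Conjecture~\ref{con:nonzero}(3) but which your proof does not yield directly. Your alternative route via Proposition~\ref{pro:macmahon} at $q=1$ is also valid: one would equate $\sum_k \binom{\lambda_1+k}{k}\binom{\lambda_2+k}{k}x^k$ with $\bigl(\sum_j \binom{\lambda_1}{j}\binom{\lambda_2}{j}x^j\bigr)/(1-x)^{\lambda_1+\lambda_2+1}$ and verify the resulting binomial convolution; this is more computational but ties the lemma back to the Hadamard-product machinery the paper actually uses elsewhere. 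In short: your proof is correct and complementary to what the paper cites, but if one wants the extra bijective information exploited later in the paper, one must consult MacMahon's third argument rather than the run decomposition.
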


In terms of Jacobi polynomials,
$$C_{(\lambda_1,\lambda_2)}(x,1)=(1-x)^{\lambda_2}
P_{\lambda_2}^{(0,\lambda_1-\lambda_2)}\left(\frac{1+x}{1-x}\right);$$ cf.\ \cite[eq.~(1.2.7)]{GasperRahman/04}.
It follows from MacMahon's third proof of Lemma~\ref{lem:lambda}
(cf.~\cite[\textsection 146]{MacMahon/16}) that the number of elements
in $S_{(\lambda_1,\lambda_2)}$ with $k$ descents equals the number of
elements with $k$ occurrences of $\bftwo$ in the first $\lambda_1$
positions. We conjecture the following.
\begin{con}\label{con:nonzero}
 Let $\lambda_1>\lambda_2$. Then the following equivalent statements hold:
\begin{enumerate}
\item $C_{(\lambda_1,\lambda_2)}(-1,1)\neq 0$,
\item 
$P_{\lambda_2}^{(0,\lambda_1-\lambda_2)}(0)\neq 0$,
\item \begin{align*}&|\{w\in S_{(\lambda_1,\lambda_2)}\mbox{ with an
    even number of $ \bftwo$s in the first $\lambda_1$ positions}
  \}|\neq\\ &|\{w\in S_{(\lambda_1,\lambda_2)}\mbox{ with an odd
    number of $ \bftwo$s in the first $\lambda_1$
    positions}\}|.\end{align*}\end{enumerate} In particular,
$C_{(\lambda_1,\lambda_2)}(x,q)$ has no unitary factor.
\end{con}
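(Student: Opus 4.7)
My plan is first to establish the equivalence of the three listed statements and then to attack the substantive content, namely the non-vanishing of $C_{(\lambda_1,\lambda_2)}(-1,1)$; the ``in particular'' clause about unitary factors will follow essentially for free from material recorded earlier in the section.

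The equivalence (1)$\Leftrightarrow$(2) is a direct substitution $x=-1$ into the Jacobi-polynomial identity recorded immediately after Lemma~\ref{lem:lambda}, yielding $C_{(\lambda_1,\lambda_2)}(-1,1)=2^{\lambda_2}P_{\lambda_2}^{(0,\lambda_1-\lambda_2)}(0)$. For (1)$\Leftrightarrow$(3) I would write $C_{(\lambda_1,\lambda_2)}(-1,1)=\sum_{w\in S_{(\lambda_1,\lambda_2)}}(-1)^{\des(w)}$ and invoke MacMahon's bijection, mentioned in the paragraph preceding the conjecture, equidistributing $\des$ on $S_{(\lambda_1,\lambda_2)}$ with the statistic counting occurrences of~$\bftwo$ among the first~$\lambda_1$ positions.

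For the non-vanishing itself, I would use Lemma~\ref{lem:lambda} to rewrite
$$C_{(\lambda_1,\lambda_2)}(-1,1)=\sum_{j=0}^{\lambda_2}(-1)^j\binom{\lambda_1}{j}\binom{\lambda_2}{j}=[x^{\lambda_2}]\,(1-x)^{\lambda_1-\lambda_2}(1-x^2)^{\lambda_2},$$
converting the problem into the non-vanishing of a specific polynomial coefficient. From here I would try one of three routes: (a) an \emph{analytic} route, interpreting this coefficient as the contour integral $\tfrac{1}{2\pi}\int_0^{2\pi}(1-e^{i\theta})^{\lambda_1-\lambda_2}(1-e^{2i\theta})^{\lambda_2}e^{-i\lambda_2\theta}\,d\theta$ and extracting a real definite integral whose integrand has a controllable sign; (b) an \emph{inductive} route on $\lambda_2$ with $k:=\lambda_1-\lambda_2$ held fixed, exploiting a contiguous relation for the sequence $(P_{\lambda_2}^{(0,k)}(0))_{\lambda_2\geq 0}$; (c) a \emph{combinatorial} sign-reversing involution on $S_{(\lambda_1,\lambda_2)}$ toggling a single descent at a distinguished position and reducing the signed count $\sum_{w}(-1)^{\des(w)}$ to an explicitly describable fixed-point set.

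Once non-vanishing is established, the ``in particular'' clause is immediate from the discussion after~\eqref{equ:C.descent}: any unitary factor of $C_{(\lambda_1,\lambda_2)}(x,q)$ specializes at $q=1$ to a unitary factor of $C_{(\lambda_1,\lambda_2)}(x,1)\in\Z[x]$, and Simion's theorem cited after Lemma~\ref{lem:factor} forces the only such candidate to be $(1+x)$, whose presence would require $C_{(\lambda_1,\lambda_2)}(-1,1)=0$, contradicting~(1). The genuine obstacle is the non-vanishing step itself. In the symmetric case $\lambda_1=\lambda_2$ the identity $P_n^{(\alpha,\alpha)}(-x)=(-1)^nP_n^{(\alpha,\alpha)}(x)$ controls vanishing at~$0$ by parity of~$n$; no such symmetry is available when $\lambda_1>\lambda_2$, and the small-parameter values $-1,-2,-2,-3,-1,3,-4,1,6,6,\dots$ exhibit no transparent sign pattern. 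Each of (a)--(c) therefore appears to demand substantial additional input---most plausibly a targeted hypergeometric summation identity of Askey--Gasper type, or a nontrivial refinement of MacMahon's bijection sensitive to the parity of~$\des$.
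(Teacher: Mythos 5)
This statement is a \emph{conjecture} in the paper (Conjecture~\ref{con:nonzero}); the authors give no proof of the non-vanishing and record it as open. The paper only notes two partial results: D.~Stanton's observation that $C_{(\lambda_1,\lambda_2)}(-1,1)\neq 0$ whenever $\lambda_1>\lambda_2(\lambda_2+1)-1$, because the alternating summands in $\sum_{j=0}^{\lambda_2}(-1)^j\binom{\lambda_1}{j}\binom{\lambda_2}{j}$ then have strictly increasing absolute values, and L.~Habsieger's unpublished result establishing the conjecture for $\lambda_1\geq c\lambda_2$ for some $c>1$. It also records a reformulation in terms of Krawtchouk polynomials, $C_{(\lambda_1,\lambda_2)}(-1,1)=(-1)^{\lambda_2}k_{\lambda_2}(\lambda_2,2,\lambda_1+\lambda_2)$. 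So you should not expect to find a proof to compare against; your honest assessment that the non-vanishing ``appears to demand substantial additional input'' is exactly right.

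The parts of your proposal that \emph{can} be completed are correct and match what the paper records. Substituting $x=-1$ into the Jacobi-polynomial identity following Lemma~\ref{lem:lambda} gives $C_{(\lambda_1,\lambda_2)}(-1,1)=2^{\lambda_2}P_{\lambda_2}^{(0,\lambda_1-\lambda_2)}(0)$, establishing (1)$\Leftrightarrow$(2); MacMahon's equidistribution of $\des$ with the ``occurrences of $\bftwo$ in the first $\lambda_1$ positions'' statistic gives (1)$\Leftrightarrow$(3); and the ``in particular'' clause follows from the unitary-factor discussion around \eqref{equ:C.descent} together with Simion's theorem that all zeros of $C_\lambda(x,1)$ are real, simple, and negative, so the only possible unitary factor at $q=1$ is $(1+x)$, which is ruled out by $C_{(\lambda_1,\lambda_2)}(-1,1)\neq 0$. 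Your generating-function rewriting $C_{(\lambda_1,\lambda_2)}(-1,1)=[x^{\lambda_2}](1-x)^{\lambda_1}(1+x)^{\lambda_2}$ is also correct and is a reasonable starting point. One small caution on your route (b): holding $k=\lambda_1-\lambda_2$ fixed while $\lambda_2\to\infty$ is a different asymptotic regime from both Stanton's condition (which bounds $\lambda_2$ in terms of $\lambda_1$ quadratically) and Habsieger's (a fixed ratio), so even if it succeeded it would cover yet another partial range rather than the whole conjecture.
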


D.\ Stanton pointed out to us that
$C_{(\lambda_1,\lambda_2)}(-1,1) \neq 0$ for all $\lambda_2$ and
$\lambda_1 > \lambda_2( \lambda_2+1)-1$, since the alternating
summands have increasing absolute values; L.~Habsieger informed us of
unpublished work of his establishing Conjecture~\ref{con:nonzero} for
all $\lambda_1 \geq c \lambda_2$ for some constant $c>1$. The quantity
$C_{(\lambda_1,\lambda_2)}(-1,1)$ may also be expressed in terms of
Krawtchouk polynomials; it is equal to
$(-1)^{\lambda_2}k_{\lambda_2}(\lambda_2,2,\lambda_1+\lambda_2)$ in
the notation of~\cite{ChiharaStanton/90}.

If $\lambda_1=\lambda_2$, then $C_{(\lambda_1,\lambda_2)}(-1,1) \neq
0$ if and only if the $\lambda_i$ are even (see
\cite[Proposition~2.4]{RSW/03}), whence
$C_{(\lambda_1,\lambda_2)}(x,q)$ has no unitary factors in this
case. In the odd case, the following holds.

\begin{pro}[{\cite[Proposition 5]{Carnevale/16}}]\label{pro:shape}
 Let $\lambda_1=\lambda_2=r\equiv 1 \pmod 2$. Then
$$C_{r,2}(x,q)=(1+xq^{r})C'_{r,2}(x,q),$$ where $C'_{r,2}(x,q)$ has no
 unitary factor.
\end{pro}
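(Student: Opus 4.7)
\textit{Plan.}
Both claims rest on a $q$-refinement of Lemma~\ref{lem:lambda} specialised to squares $\lambda_1=\lambda_2=r$, namely
\[
C_{r,2}(x,q) \;=\; \sum_{k=0}^{r} x^{k}\,q^{k^{2}}\,\binom{r}{k}_{q}^{2},
\]
which I expect to be the main technical step. I would prove it by extracting the coefficient of $x^n$ from both sides of MacMahon's formula~\eqref{eq:mmpart} after expanding $\prod_{i=0}^{2r}(1-xq^i)^{-1}=\sum_{n\geq 0} \binom{2r+n}{n}_q x^n$; the resulting convolution
\[
\binom{r+n}{n}_{q}^{2} \;=\; \sum_{k=0}^{\min(r,n)} q^{k^{2}}\,\binom{r}{k}_{q}^{2}\,\binom{2r+n-k}{n-k}_{q}
\]
is a $q$-analogue of the classical identity underlying Lemma~\ref{lem:lambda} and can be derived from the $q$-Chu--Vandermonde convolution.

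With the explicit formula in hand, the divisibility $(1+xq^r)\mid C_{r,2}(x,q)$ follows by substituting $x=-q^{-r}$:
\[
C_{r,2}(-q^{-r},q) \;=\; \sum_{k=0}^{r}(-1)^{k}\,q^{k(k-r)}\,\binom{r}{k}_{q}^{2}.
\]
The exponent $k(k-r)$ and the Gaussian binomial $\binom{r}{k}_{q}$ are invariant under $k\mapsto r-k$ (the latter by the standard symmetry of $q$-binomials), whereas the sign changes since $(-1)^{r-k}=-(-1)^{k}$ because $r$ is odd; the involution is fixed-point-free for the same parity reason, so summands cancel in pairs and $C_{r,2}(-q^{-r},q)=0$. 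Since $1+xq^r$ is irreducible in $\Z[x,q]$ and coprime to $q$, a standard Gauss-lemma argument yields divisibility in $\Z[x,q]$, so $C'_{r,2}(x,q)\in\Z[x,q]$.

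To rule out further unitary factors, suppose $f(x,q)=F(x^{a}q^{b})$ is a nonconstant unitary divisor of $C'_{r,2}(x,q)$. If $a\geq 1$, then $f(x,1)=F(x^{a})$ has a root of modulus one, forcing $C'_{r,2}(x,1)=C_{r,2}(x,1)/(1+x)$ to have one as well; but by Simion's theorem (recalled in the remark preceding the statement), every root of $C_{r,2}(x,1)=\sum_{k} \binom{r}{k}^{2}x^{k}$ is real, negative and simple, and the unique unit-modulus candidate $x=-1$ has already been divided out. If $a=0$, then $f$ depends only on $q$ and must divide every coefficient of $C'_{r,2}(x,q)\in\Z[q][x]$, in particular $[x^{0}]C'_{r,2}(x,q)=C_{r,2}(0,q)=1$ (by uniqueness of the descent-free element $\bfone^{r}\bftwo^{r}\in S_{r,2}$). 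Both cases contradict $f$ being nonconstant.
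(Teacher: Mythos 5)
The statement is quoted in the paper with a reference to \cite[Proposition~5]{Carnevale/16}; the paper itself carries no proof, so there is nothing internal to compare against. Judged on its own merits, your argument is sound and gives a clean self-contained derivation.

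Your structure is: (a) establish the explicit $q$-binomial expansion $C_{r,2}(x,q)=\sum_{k=0}^{r}x^{k}q^{k^{2}}\binom{r}{k}_q^{2}$, (b) verify $C_{r,2}(-q^{-r},q)=0$ by the sign-reversing involution $k\mapsto r-k$, which is fixed-point-free because $r$ is odd, and transfer divisibility from $\Q(q)[x]$ to $\Z[x,q]$ via primitivity of $1+xq^{r}$ and Gauss, and (c) exclude further unitary factors by splitting into the cases $a\geq 1$ (reduces, via $q=1$, to Simion's real-rooted-and-simple theorem applied to $C_{r,2}(x,1)/(1+x)$) and $a=0$ (a unitary factor in $q$ alone would divide the constant coefficient $C'_{r,2}(0,q)=1$). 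Steps (b) and (c) are complete and correct. The one place where you lean on an assertion rather than a proof is step (a): you claim the coefficient identity
\[
\binom{r+n}{n}_{q}^{2}=\sum_{k}q^{k^{2}}\binom{r}{k}_{q}^{2}\binom{2r+n-k}{n-k}_{q}
\]
``can be derived from $q$-Chu--Vandermonde'' without carrying it out, and this is not a single application of $q$-Vandermonde but a squared version. The explicit formula for $C_{(\lambda_1,\lambda_2)}(x,q)$ as $\sum_k x^kq^{k^2}\binom{\lambda_1}{k}_q\binom{\lambda_2}{k}_q$ is in fact classical (it is MacMahon's $q$-refinement of Lemma~\ref{lem:lambda}, and underlies the Fürlinger--Hofbauer theory of $q$-ballot numbers), and it would be cleaner to invoke it by reference or prove it directly by the standard bijection encoding a two-letter word with $k$ descents by a pair of $k$-subsets, rather than route through an unproved convolution. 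That said, once the explicit formula is in place, everything else in your write-up is watertight, and the argument is a legitimate alternative to simply citing \cite{Carnevale/16}.
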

 Generalizing Lemma~\ref{lem:factor}, Conjecture~\ref{con:nonzero} and
Proposition \ref{pro:shape}, we put forward the following

 \begin{con}\label{con:master}
   Let $\lambda = (\lambda_1,\dots,\lambda_m)$ be a partition. Then
   $C_{\lambda}(x,q)$ has a unitary factor if and only if $\lambda =
   (r^m)$ is a rectangle, with $r$ odd and $m$ even. In this
   case, $$C_{r,m}(x,q) = (1+xq^{\frac{rm}{2}})C'_{r,m}(x,q)$$ and
   $C'_{r,m}(x,q)$ has no unitary factor.
 \end{con}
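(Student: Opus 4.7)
The plan is to first classify what a unitary factor of $C_\lambda(x,q)$ can possibly look like, then split the "if and only if" into necessity and sufficiency.

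\textbf{Preliminary reduction.} By definition, a nonconstant unitary factor has the form $F(x^a q^b)$ with $F\in\Z[Y]$ whose complex roots all lie on the unit circle. Setting $q=1$, such a factor divides $C_\lambda(x,1)$, whose roots (by the Simion generalisation of Frobenius's theorem recalled before Conjecture~\ref{con:nonzero}) are real, simple, and negative. Hence $F(x^a)$ has only roots of modulus $1$ and dividing $C_\lambda(x,1)$, forcing $a=1$ and $F(Y)=c(1+Y)$. So every nonconstant unitary factor of $C_\lambda(x,q)$ is, up to a scalar, $(1+xq^b)$ for some $b\in\N_0$, and by the simplicity of $-1$ as a root of $C_\lambda(x,1)$ it appears with multiplicity one.

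\textbf{Step 1: Necessity of rectangularity.} If $\lambda$ is not a rectangle, I would show $C_\lambda(-1,1)\neq 0$, extending Conjecture~\ref{con:nonzero} from $m=2$ to general $m$. The plan is to build a sign-reversing involution on $S_\lambda$ whose fixed points admit a closed-form count, patterned on MacMahon's positional bijection in the proof of Lemma~\ref{lem:lambda} and on Stanton's estimate of the alternating summand sizes; the crucial asymmetry between parts $\lambda_i$ should provide an unmatched residue.

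\textbf{Step 2: Only $b=rm/2$ can occur.} Assume $\lambda=(r^m)$ and write $C_{r,m}(x,q)=\sum_{d,j} a_{d,j} x^d q^j$. Lemma~\ref{lem:inv} gives $a_{d,j}=a_{d,rmd-j}$, which is equivalent to the functional equation
\[
C_{r,m}(xq^{rm},q^{-1})=C_{r,m}(x,q).
\]
If $(1+xq^b)\mid C_{r,m}(x,q)$, applying this identity shows $(1+xq^{rm-b})\mid C_{r,m}(x,q)$ as well. By the uniqueness conclusion of the preliminary reduction, $b=rm-b$, i.e. $b=rm/2$. In particular $rm$ is even. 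When $rm$ is odd we are done immediately, so from now on $rm$ is even.

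\textbf{Step 3: Ruling out the remaining bad parities.} For $\lambda=(r^m)$ with $rm$ even but \emph{not} of the form ($r$ odd, $m$ even), I would show $C_{r,m}(-1,1)\neq 0$. The Charney--Davis interpretation of $C_{(r^m)}(-1,1)$ recalled after Lemma~\ref{lem:factor} suggests attacking this directly via the poset of $m$ chains of length $r$: when $r$ is even each chain has an odd number of elements, producing a pairing on flags with no fixed points that reverses sign once. Specialising $x=-1,q=1$ in the MacMahon identity~\eqref{eq:mmpart} provides an alternate route via residue calculus at $x=-1$ on $\prod_i \binom{r+k}{k}_q\big|_{q=1}=\binom{r+k}{k}^m$.

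\textbf{Step 4: Sufficiency for $r$ odd, $m$ even.} Following the blueprints of Lemma~\ref{lem:factor} and Proposition~\ref{pro:shape}, let $k=rm/2$ and set
\[
S_{r,m}^{\{k\}}=\{w\in S_{r,m}\mid \Des(w)\subseteq[rm-1]\setminus\{k\}\}.
\]
The plan is to construct a descent-preserving bijection
\[
\Psi\colon S_{r,m}\setminus S_{r,m}^{\{k\}}\longrightarrow S_{r,m}^{\{k\}}, \qquad \Des(\Psi(w))=\Des(w)\setminus\{k\},
\]
so that splitting the sum over $S_{r,m}$ into the two halves at position $k$, exactly as in the computation that proves Lemma~\ref{lem:factor}, yields the factorisation $C_{r,m}(x,q)=(1+xq^k)C'_{r,m}(x,q)$. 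The map $\Psi$ is to be built by ``flipping across the midpoint'' using the involution $^\circ$ of~\eqref{def:map.circ} restricted to one of the two halves of $w$; the hypothesis that $r$ is odd and $m$ even ensures $k=rm/2$ is an integer that is not a multiple of $r$, so the two halves of the multiset $A_{(r^m)}$ are genuinely different and the flip has no unwanted fixed points.

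\textbf{Step 5: No further unitary factor.} Setting $q=1$ in the factorisation of Step 4 gives $C_{r,m}(x,1)=(1+x)C'_{r,m}(x,1)$. By Simion, $-1$ is a simple root of $C_{r,m}(x,1)$, so $C'_{r,m}(-1,1)\neq 0$. Combined with the preliminary reduction, $C'_{r,m}(x,q)$ has no unitary factor.

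\textbf{Main obstacle.} The decisive step is Step 4: exhibiting the midpoint bijection $\Psi$ uniformly in $r$ odd and $m$ even. The cases $r=1$ (Lemma~\ref{lem:factor}) and $m=2$ (Proposition~\ref{pro:shape}) are handled by genuinely different constructions in the literature, and unifying them requires the midpoint-flip to interact cleanly with the multiset structure; this is a purely combinatorial problem but non-trivial. Step 3 is the next hardest, since Charney--Davis-type identities for rectangular shapes seem not to admit a closed form; a residue analysis of~\eqref{eq:mmpart} appears to be the most promising route.
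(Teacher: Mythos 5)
This statement is Conjecture~\ref{con:master} of the paper; the authors explicitly put it forward as an open problem (``we put forward the following''), so there is no paper proof to compare against. What the paper does prove are the special cases that the conjecture interpolates: Lemma~\ref{lem:factor} ($r=1$), Proposition~\ref{pro:shape} ($m=2$, $r$ odd, quoted from~\cite{Carnevale/16}), and the non-existence statements for $m=2$ with $\lambda_1=\lambda_2$ even (via~\cite{RSW/03}); the remaining non-existence case for $m=2$ with $\lambda_1>\lambda_2$ is itself only Conjecture~\ref{con:nonzero}.

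Your preliminary reduction and Step~2 are correct and a genuinely good contribution to organizing the problem: the Frobenius--Simion simplicity of the root $-1$ of $C_\lambda(x,1)$ forces any unitary factor to be $(1+xq^b)$ for a unique $b$, and combining this with the symmetry $a_{d,j}=a_{d,rmd-j}$ from Lemma~\ref{lem:inv} pins down $b=rm/2$ and shows $rm$ must be even. (Your derivation of $(1+xq^{rm-b})\mid C_{r,m}$ passes through $\Z[x,q,q^{-1}]$; one should note explicitly that because $1+xq^{rm-b}$ is a nonunit irreducible there, divisibility descends to $\Z[x,q]$, but this is a routine verification.)

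However, Steps~1, 3, and~4 are plans, not proofs, and they contain the entire mathematical content of the conjecture. Step~1 subsumes Conjecture~\ref{con:nonzero}, which the paper leaves open even for $m=2$ and cites only partial results of Stanton and Habsieger; no sign-reversing involution with a closed-form fixed-point count is exhibited. Step~3 ($r$ even) is likewise only gestured at via the Charney--Davis quantity, with no fixed-point-free pairing supplied. Step~4 is the decisive gap, and you should be aware that modelling it on Lemma~\ref{lem:factor} carries a hidden risk: the bijection $w\mapsto ww_0$ appealed to in that proof does \emph{not} satisfy $\Des(ww_0)\setminus\{k\}=\Des(w)$ pointwise (already for $m=4$, $k=2$, and $\Des(w)=\{1,3\}$ one gets $\Des(ww_0)=\{2\}$), so the equality between the third and fourth lines of that computation needs a different justification than the stated bijection. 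Any correct version of your $\Psi$ would in fact have to supply a descent-set-preserving bijection, a stronger and genuinely nontrivial combinatorial ingredient. Until Steps~1, 3, and~4 are carried out, the proposal is a reduction scheme rather than a proof, consistent with the statement's status as a conjecture in the paper.
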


 \subsection{Functional equations}

 \begin{pro}\label{pro:funeq.C.rectangle}
   For all $r,m\in\N$,
 \begin{equation*}\label{rmbiv}
   C_{r,m}(x^{-1},q^{-1})=    x^{-r(m-1)}q^{-r^2 \binom{m}{2}} C_{r,m}(x,q).
 \end{equation*}
 If $\lambda$ is not a rectangle, then $C_{\lambda}(x,q)$ does not
 satisfy a functional equation of the form
 \begin{equation}\label{equ:funeq.gen}
  C_{\lambda}(x^{-1},q^{-1}) = x^{-d_1}q^{-d_2} C_{\lambda}(x,q),
\end{equation} for
$d_1,d_2\in\N_0$.
 \end{pro}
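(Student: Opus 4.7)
I aim to prove the FE as the composition of two ``half-symmetries.'' The first,
\[
C_{r,m}(x^{-1},q^{-1}) = C_{r,m}(q^{-rm}x^{-1},q),
\]
is immediate from Lemma~\ref{lem:inv}: the involution $^\circ$ preserves $\des$ and sends $\maj(w)\mapsto rm\,\des(w)-\maj(w)$, so each $e_d(q):=\sum_{\des(w)=d}q^{\maj(w)}$ is palindromic of degree $rmd$.

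The second half, $C_{r,m}(q^{-rm}x^{-1},q)=x^{-r(m-1)}q^{-r^2\binom{m}{2}}C_{r,m}(x,q)$, is extracted from Proposition~\ref{pro:macmahon}. Expanding $g(x,q):=\prod_{k=0}^r(1-q^kx)^{-1}$ in partial fractions with coefficients $c_k(q) = (-1)^k q^{\binom{k+1}{2}} \binom{r}{k}_q / \prod_{i=1}^r(1-q^i)$, the Hadamard product $G(x,q):=\Asterisk_{i=1}^m g(x,q) = \sum_{K=0}^{rm} C_K(q)/(1-xq^K)$ becomes a manageable sum of simple fractions. Two $q$-binomial ingredients then complete the argument: the substitution $k_i\mapsto r-k_i$, combined with $\binom{r}{k}_q=\binom{r}{r-k}_q$, yields the reflection
\[
C_{rm-K}(q) = (-1)^{rm}\, q^{m\binom{r+1}{2}-(r+1)K}\, C_K(q);
\]
and the $q$-binomial theorem $\prod_{j=0}^{r-1}(1-zq^j)=\sum_k(-1)^k q^{\binom{k}{2}} \binom{r}{k}_q z^k$, specialised at $z=q^{-(i-1)}$ for $i=1,\dots,r$, yields $\sum_K q^{-iK} C_K(q)=0$ (the product then contains the zero factor $1-q^0$). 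Putting these together gives the rational-function functional equation $G(q^{-rm}x^{-1},q) = (-1)^{rm+1} x^{r+1} q^{m\binom{r+1}{2}} G(x,q)$, which translates to the second half upon clearing the MacMahon denominator $\prod_{i=0}^{rm}(1-xq^i)$ (which itself transforms by a monomial under $x\mapsto q^{-rm}x^{-1}$). Multiplying the two halves yields the claimed FE.

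\textbf{Approach for the non-rectangle case.} Argue by contradiction. Assume $C_\lambda(x^{-1},q^{-1})=\pm x^{-d_1}q^{-d_2}C_\lambda(x,q)$ for some $d_1,d_2\in\N_0$. Writing $n_{d,M}:=|\{w\in S_\lambda:\des(w)=d,\maj(w)=M\}|$, the FE yields the coefficient-wise identity $n_{d,M}=\pm n_{d_1-d,d_2-M}$. Nonnegativity forces the sign $+$, and $n_{0,0}=1$ (the unique trivial word) forces $n_{d_1,d_2}=1$. A short case analysis on $d_1$ versus $D_{\max}:=\sum_\sigma(\mu_\sigma-1)=\max_w\des(w)$ forces $d_1=D_{\max}$: if $d_1<D_{\max}$, any word with $\des=D_{\max}$ would need a ``mirror'' with negative descent count; if $d_1>D_{\max}$, then $n_{0,0}=1$ would require $n_{d_1,d_2}>0$ at an unachievable descent value. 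The identity $n_{D_{\max},M}=n_{0,d_2-M}=\delta_{M,d_2}$ then shows a \emph{unique} word attains $\des=D_{\max}$. By Lemma~\ref{lem:unique.longest}, this uniqueness characterizes rectangles---a contradiction.

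\textbf{Main obstacle.} The technical heart is the rational-function FE for $G$, which hinges on the partial-fraction expansion of $g$, the reflection of the $C_K$'s, and the $q$-binomial vanishing identity; all reduce cleanly to the $q$-binomial theorem, but the sign and exponent bookkeeping deserves care. The non-rectangle part is essentially a clean positivity/pigeonhole argument modulo Lemma~\ref{lem:unique.longest}.
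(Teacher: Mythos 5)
Your proof is correct, and its conceptual skeleton matches the paper's: both decompose the rectangle functional equation into (a) the ``palindromicity in $q$'' coming from the involution $\phantom{}^\circ$ of Lemma~\ref{lem:inv}, which is exactly your first half $C_{r,m}(x^{-1},q^{-1})=C_{r,m}(q^{-rm}x^{-1},q)$, and (b) a relation equivalent to MacMahon's identity $C^{(r(m-1)-i)}_{r,m}(q)=q^{r^2\binom{m}{2}-irm}C^{(i)}_{r,m}(q)$, which is your second half. The difference is that the paper simply cites MacMahon (\textsection 461) for (b), whereas you \emph{derive} it from Proposition~\ref{pro:macmahon} by partial fractions, the $q$-binomial reflection $\binom{r}{k}_q=\binom{r}{r-k}_q$, and the $q$-binomial theorem specialised to kill the lower-order terms. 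I checked the bookkeeping: the coefficients $c_k(q)$, the reflection $C_{rm-K}(q)=(-1)^{rm}q^{m\binom{r+1}{2}-(r+1)K}C_K(q)$, the vanishing $\sum_Kq^{-iK}C_K(q)=0$ for $i\in[r]$, and the resulting rational-function FE for $G$ are all right, and clearing $\prod_{i=0}^{rm}(1-xq^i)$ does give $m\binom{r+1}{2}-\binom{rm+1}{2}=-r^2\binom{m}{2}$. So what you buy is a self-contained proof that avoids the citation, at the cost of some length. For the non-rectangle direction the paper's argument is shorter: set $q=1$ to see that the FE forces $C_\lambda(x,1)$ to be monic, then invoke Lemma~\ref{lem:unique.longest}. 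Your coefficient-wise pigeonhole argument on $n_{d,M}$ arrives at the same conclusion (a unique word of maximal descent) by slightly different bookkeeping; it is correct, and also handles the $\pm$ variant needed for Theorem~\ref{cor:funeq}, which the paper's Proposition~\ref{pro:funeq.C.rectangle} strictly speaking does not state.
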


\begin{proof}
  As $C_{\lambda}(x,1)\in\Z[x]$ has constant term $1$, a necessary
  condition for $C_\lambda$ to satisfy a functional equation of the
  form \eqref{equ:funeq.gen} is that $C_\lambda(x,1)$ is monic. By
  Lemma~\ref{lem:unique.longest}, this holds if and only if $\lambda$
  is a rectangle. This proves the second statement.

  To establish the first statement, let $r,m\in\N$. For
  $i\in[r(m-1)]_0$, we set $$C^{(i)}_{r,m}(q)=\sum_{\{w \in S_{r,m}
    \mid \des(w)=i\}}q^{\maj(w)}\in\Z[q],$$ so that $C_{r,m}(x,q) =
  \sum_{i=0}^{r(m-1)}C_{r,m}^{(i)}(q)x^i.$ With the map
  $\phantom{}^\circ$ defined in \eqref{def:map.circ},
  Lemma~\ref{lem:inv} yields
  \begin{equation}\label{majsym}\nonumber
    C^{(i)}_{r,m}(q^{-1})=
    q^{-irm}\sum_{\substack{\{ w\in
        S_{r,m}\mid \\
        \des(w)=i\}}}q^{irm-\maj(w)}=q^{-irm}\sum_{\substack{\{ w\in
        S_{r,m}\mid \\ \des(w)=i\}}}q^{\maj(w^\circ)} = q^{-irm}
    C^{(i)}_{r,m}(q).
  \end{equation}
  Using this and the relations
  \begin{equation*}\label{mm}
    C^{(r(m-1)-i)}_{r,m}(q)=q^{r^2\binom{m}{2}-irm} C^{(i)}_{r,m}(q)
  \end{equation*}
  (cf. \cite[\textsection 461, Vol.~2, Ch.~IV, Sect.~IX]{MacMahon/16})
  we obtain
 \begin{align*}
   C_{r,m}(x^{-1},q^{-1})&=\sum_{i=0}^{r(m-1)}
   C_{r,m}^{(i)}(q^{-1})x^{-i}=\sum_{i=0}^{r(m-1)}
   q^{-irm}C_{r,m}^{(i)}(q)x^{-i}\\&=\sum_{i=0}^{r(m-1)}
   q^{-irm}q^{-r^2\binom{m}{2}+irm}C_{r,m}^{(r(m-1)-i)}(q)x^{-i}\\&=
   q^{-r^2\binom{m}{2}}\sum_{j=0}^{r(m-1)} C_{r,m}^{(j)}(q)
   x^{-r(m-1)+j}\\ &=
   x^{-r(m-1)}q^{-r^2\binom{m}{2}}C_{r,m}(x,q).\qedhere
 \end{align*}
 \end{proof}

 \section{Proofs of Theorems~\ref{thm:partition} and \ref{cor:funeq}}\label{sec:proof}
\subsection{Proof of Theorem~\ref{thm:partition}}

Let $r\in\N$. 
Recall that
$ \zeta_{\Z^r}(s) = \prod_{p \textup{ prime}}\zeta_{\Zp^r}(s)$, where,
for a prime $p$, the Euler factor
$\zeta_{\Zp^r}(s) = \sum_{k=0}^\infty a_{p^k}(\Zp^r)p^{-ks}$
enumerates the $\Zp$-submodules of finite additive index in
$\Zp^r$. Here, $\Zp$ denotes the ring of $p$-adic integers.
 \begin{lemma}[cf., e.g., \cite{Gruber/97}]\label{lem:subgroups}
   For all $k\in\N_0$,
 \begin{equation*}\label{equ:product}
   a_{p^k}(\Zp ^r) = \binom{r-1+k}{k}_p.
 \end{equation*}
 \end{lemma}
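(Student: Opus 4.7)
The plan is to exploit the Hermite normal form (HNF) for sublattices of $\Zp^r$ and then invoke a standard $q$-binomial identity.

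First I would parametrise each finite-index $\Zp$-submodule $L \leq \Zp^r$ by a unique upper-triangular matrix $M = (m_{ij}) \in \Mat_r(\Zp)$ whose columns (say) form a $\Zp$-basis of $L$: the diagonal entries are $p$-powers $m_{ii} = p^{e_i}$ with $e_i \in \N_0$, and the above-diagonal entries are taken in a fixed set of representatives of $\Zp / p^{e_i}\Zp$, e.g.\ $\{0,1,\dots,p^{e_i}-1\}$. The index condition $[\Zp^r : L] = p^k$ translates into $\sum_{i=1}^r e_i = k$.

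For a fixed composition $\bfe = (e_1,\dots,e_r)$ with $\sum e_i = k$, the number of matrices in HNF with diagonal $(p^{e_1},\dots,p^{e_r})$ is $\prod_{i<j} p^{e_i} = p^{\sum_{i=1}^r (r-i)e_i}$, since the entry $m_{ij}$ for $i<j$ has $p^{e_i}$ admissible values. Summing over all such compositions yields
\[
a_{p^k}(\Zp^r) = \sum_{\substack{e_1+\cdots+e_r=k\\ e_i\in\N_0}} p^{\sum_{i=1}^r (r-i)e_i}.
\]

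It then remains to identify the right-hand side with $\binom{r-1+k}{k}_p$. This is a classical $q$-identity: the generating function
\[
\prod_{i=0}^{r-1} \frac{1}{1-q^i x} = \sum_{k\geq 0}\binom{r-1+k}{k}_q x^k
\]
expands, upon opening each geometric series, as $\sum_{k\geq 0}\bigl(\sum_{e_1+\cdots+e_r=k} q^{\sum (i-1)e_i}\bigr)x^k$; a reindexing $e_i \leftrightarrow e_{r+1-i}$ matches the exponents with those obtained above. Specialising $q=p$ and extracting the coefficient of $x^k$ gives the claim.

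No serious obstacle is expected: the proof is essentially bookkeeping once the HNF parametrisation is in place. The one point requiring a touch of care is the standard $q$-binomial generating function identity, which could either be quoted from a reference such as \cite[Proposition~1.7.3]{Stanley/12} or verified by induction on $r$ using the Pascal-type recursion $\binom{r+k}{k}_q = \binom{r+k-1}{k}_q + q^r\binom{r+k-1}{k-1}_q$, which matches the recursion obtained by separating the cases $e_1=0$ and $e_1\geq 1$ in the sum above.
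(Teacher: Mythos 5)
Your proof is correct, and the Hermite normal form count is carried out accurately: for a fixed exponent vector $\bfe=(e_1,\dots,e_r)$ with $\sum e_i = k$, the entry $m_{ij}$ (row $i$, column $j$, $i<j$) indeed ranges over $\Zp/p^{e_i}\Zp$, giving $p^{\sum_i (r-i)e_i}$ lattices, and the reindexing $e_i\leftrightarrow e_{r+1-i}$ matches this with the coefficient of $t^k$ in $\prod_{i=1}^r(1-p^{i-1}t)^{-1} = \sum_k \binom{r-1+k}{k}_p t^k$.

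The route differs from the paper's mainly in how much is proved versus cited. The paper's proof is two citations: the first equality $\sum_k a_{p^k}(\Zp^r)t^k = \prod_{i=1}^r(1-p^{i-1}t)^{-1}$ is quoted as the local factor of \eqref{zetafree} (the Grunewald--Segal--Smith formula $\zeta_{\Z^r}(s)=\prod_{i=0}^{r-1}\zeta(s-i)$), and the second equality is the $q$-binomial generating-function identity cited to Stanley. You instead re-derive that first equality from scratch by the HNF parametrisation, which is in fact the standard way the Grunewald--Segal--Smith formula is proved. So in substance the two arguments coincide; yours simply unpacks the first citation into an explicit bijective count, making the proof self-contained at the cost of a little length, while the paper leans on the formula already recorded in the introduction.
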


\begin{proof}
  For a variable $t$,
 \begin{equation}\label{equ:subgroups.abel}\sum_{k=0}^\infty a_{p^k}(\Zp ^r)t^k =
  \frac{1}{\prod_{i=1}^r(1-p^{i-1}t)}= \sum_{k=0}^\infty
  \binom{r-1+k}{k}_pt^k;
  \end{equation}see \eqref{zetafree} for the first equality
  and, for instance, \cite[Ch.\ 1.8]{Stanley/12} for the second.
\end{proof}

\begin{rem}\label{rem:macmahon}
Proposition~\ref{pro:macmahon} follows from combining the second
equality in \eqref{equ:subgroups.abel} with \eqref{eq:mmpart}.
\end{rem}

Recall that for a map $T:X\rarr X$ we denote {by} $\OO_{T}(n) $
the number of closed orbits of $T$ of length~$n$. Let $\FF _T(n) = |
\{ x\in X \mid T^n(x) = x\}|$ denote the number of points of
period~$n$. Then
\begin{equation}\label{eq:ft}
  \FF_{T}(n) = \sum_{d|n}d \OO_{T}(d)
\end{equation}
and, by M\"obius inversion,
\begin{equation}\label{eq:mob}
\OO_T (n)=\frac{1}{n} \sum_{d|n}\mu \left(\frac{n}{d}\right)\FF _T (d).
\end{equation}
From \eqref{eq:ft}, 
\begin{equation}\label{eq:pt}
\pp_T(s):=\sum_{n=1}^{\infty}\FF_{T}(n) n^{-s}=\zeta(s)\dd_T(s-1). 
\end{equation}

Let now $r\in \N$ and $T _r$ be a map with orbit Dirichlet series
$\dd_{T_r}(s) = \zeta_{\Z^r}(s)$ as in \eqref{zetafree}.
\begin{cor}\label{lem:fta} For all $k\in\N_0$,
 \begin{equation*}
   \FF_{T_r} (p^k)= \sum_{j=0}^k p^{j} a_{p^j}(\Zp^r) =  \binom{r+k}{k}_{p}.
 \end{equation*}
\end{cor}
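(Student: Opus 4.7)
The first equality is immediate from the defining relation \eqref{eq:ft}. Setting $n=p^k$ in that formula, the divisors are precisely $p^0,p^1,\dots,p^k$, so
$$\FF_{T_r}(p^k) = \sum_{j=0}^k p^j\,\OO_{T_r}(p^j) = \sum_{j=0}^k p^j\, a_{p^j}(\Zp^r),$$
where the second step uses that $\dd_{T_r}(s)=\zeta_{\Z^r}(s)$ by hypothesis, so $\OO_{T_r}(p^j)=a_{p^j}(\Zp^r)$.

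For the second equality I would substitute Lemma~\ref{lem:subgroups}, reducing the claim to the purely combinatorial $q$-binomial identity
$$\sum_{j=0}^k p^j \binom{r-1+j}{j}_p = \binom{r+k}{k}_p.$$
The cleanest route is via generating functions. Multiplying by $t^k$ and summing over $k\geq 0$, the left-hand side becomes
$$\frac{1}{1-t}\sum_{j\geq 0}\binom{r-1+j}{j}_p (pt)^j = \frac{1}{(1-t)\prod_{i=1}^{r}(1-p^{i}t)} = \frac{1}{\prod_{i=0}^{r}(1-p^{i}t)},$$
where the middle equality uses \eqref{equ:subgroups.abel} applied at $pt$. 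The final expression is, by \eqref{equ:subgroups.abel} with $r$ replaced by $r+1$, precisely $\sum_{k\geq 0}\binom{r+k}{k}_p t^k$, and comparing coefficients of $t^k$ yields the identity.

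Alternatively one can argue by induction on $k$ using the $q$-Pascal rule $\binom{r+k}{k}_p = \binom{r+k-1}{k-1}_p + p^k\binom{r-1+k}{k}_p$, which isolates the top summand from the inductive hypothesis. Either way, there is no real obstacle: the statement is a direct packaging of M\"obius-type bookkeeping on top of Lemma~\ref{lem:subgroups}, and the only tiny nuisance is keeping the indexing conventions of the two $q$-binomials straight.
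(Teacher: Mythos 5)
Your proof is correct, and it is essentially the paper's argument localized at $p$: the paper observes globally that $\pp_{T_r}(s)=\zeta(s)\,\dd_{T_r}(s-1)=\zeta_{\Z^{r+1}}(s)$ and then invokes Lemma~\ref{lem:subgroups} with $r+1$ in place of $r$, whereas your generating-function computation $\frac{1}{1-t}\cdot\frac{1}{\prod_{i=1}^r(1-p^it)}=\frac{1}{\prod_{i=0}^r(1-p^it)}$ is precisely the Euler factor at $p$ of that same Dirichlet-series identity. You are somewhat more explicit about where the middle expression $\sum_{j=0}^k p^j a_{p^j}(\Zp^r)$ comes from (namely \eqref{eq:ft}), and the $q$-Pascal induction you sketch as an alternative is a clean elementary substitute, but no new idea is involved.
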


\begin{proof}
By \eqref{zetafree} and \eqref{eq:pt},
$\pp_{T_r}(s)=\zeta(s)\prod_{i=0}^{r-1}\zeta(s+1-i)=\zeta_{\Z ^{r+1}}(s).$
Together with Lemma~\ref{lem:subgroups}, this yields the result.
\end{proof}
Recall that $\lambda=(\lambda_1,\dots,\lambda_m)$ is a partition.  For
all $n\in\N$ we have $\FF_{ T_{\lambda_1}\times\cdots\times
   T_{\lambda_m}}(n)=\prod_{i=1}^m \FF _{T_{\lambda_i}}(n)$, so that,
by Corollary~\ref{lem:fta}, for a prime $p$ and $k\in\N_0$,
$$\FF_{T_{\lambda}} (p^k)=\prod_{i=1}^m
\binom{\lambda_i+k}{k}_{p}.$$ Using \eqref{eq:mob} we deduce, as
in the proof of \cite[Proposition~3.1]{PakapongpunWard/14}, that, for
$k>0$,
\begin{align*}
  \OO_{T_{\lambda}}(p^k) &= \frac{1}{p^k} \sum_{d| p^k}
  \mu\left(\frac{p^k}{d}\right)
  \FF_{T_{\lambda}}(d)\\
  &= \frac{1}{p^k} \left( \FF_{ T_{\lambda}}(p^k) -
    \FF_{T_{\lambda}}(p^{k-1})\right)\\
  &= \frac{1}{p^k} \left(\prod_{i=1}^m \binom{\lambda_i+k}{k}_{p} -
    \prod_{i=1}^m \binom{\lambda_i+k-1}{k-1}_{p}\right).
\end{align*}
Hence
\begin{align*}
  \dd_{T_{\lambda},p}(s)= \sum_{k=0}^\infty \OO_{
    T_{\lambda}}(p^k)t^k &= 1 + \sum_{k=1}^\infty \frac{1}{p^k}
  \left(\prod_{i=1}^m \binom{\lambda_i+k}{k}_{p} - \prod_{i=1}^m
    \binom{\lambda_i+k-1}{k-1}_{p}\right)t^k\\ &=
  \left(1-\frac{t}{p}\right) \sum_{k=0}^\infty \left(\prod_{i=1}^m
    \binom{\lambda_i+k}{k}_{p} \right)
  \left(\frac{t}{p}\right)^k.
\end{align*}
By substituting $(t/p,p)$ for $(x,q)$ in \eqref{eq:mmpart} and
setting $t=p^{-s}$, this may be rewritten as
$$\dd_{ T_{\lambda},p}(s)=
\frac{C_{\lambda}(p^{-1-s},p)}{\prod_{i=1}^{N}(1-p^{i-1-s})}.$$
The second statement in \eqref{equ:partition} follows from
\eqref{equ:C.general.part}. This concludes the proof of
Theorem~\ref{thm:partition}.  

\subsection{Proof of Theorem~\ref{cor:funeq}}
Given the expression \eqref{equ:partition} in
Theorem~\ref{thm:partition}, it is clear that a functional equation of
the form \eqref{equ:funeq.dir} holds if and only if $C_{\lambda}(x,q)$
satisfies a functional equation of the form~\eqref{equ:funeq.gen}. By
Proposition~\ref{pro:funeq.C.rectangle}, this holds if and only if
$\lambda$ is a rectangle. If $\lambda=(r^m)$, then, substituting
$(p^{-1-s},p)$ for $(x,q)$, this result implies that
\begin{equation}\label{eq:numfun}C_{r,m}(p^{1+s},p^{-1}) =
  p^{-r^2\binom{m}{2}+r(m-1)+sr(m-1)}
  C_{r,m}(p^{-1-s},p).\end{equation}
The functional equation \eqref{equ:fun} holds, as
$$\frac{1}{\prod_{i=1}^{rm}(1-p^{-i+1+s})} =
(-1)^{rm}p^{\binom{rm+1}{2}-rm-srm}\frac{1}{\prod_{i=1}^{rm}(1-p^{i-1-s})}$$
combined with \eqref{eq:numfun} gives
\begin{align*}
  \dd_{ T_r^{\times m},p}(s)\vert_{p \rarr p^{-1}} &= (-1)^{rm}
  p^{\binom{rm+1}{2}-rm-r^2\binom{m}{2}+r(m-1)-srm+sr(m-1)}
  \dd_{{ T}_r^{\times m},p}(s)\\&=(-1)^{rm}
  p^{m\binom{r+1}{2}-r-rs} \dd_{{T}_r^{\times m},p}(s).
  \qedhere \end{align*}

\section{Analytic properties and asymptotics}\label{sec:analytic}

In this section we exploit the combinatorial description of the
Dirichlet series $\dd_{ T _\lambda}(s)$ given in \eqref{equ:partition}
to deduce some of their key analytic properties.  Recall that $\lambda$ is a partition of $N = \sum_{i=1}^m \lambda_i$. In
the following, $f(n) \sim g(n)$ means that $\lim_{n\rarr
  \infty}f(n)/g(n) = 1$.
\begin{thm}\label{analytic}
\begin{enumerate}
\item The orbit Dirichlet series $\dd_{T_{\lambda}}(s)$ has
  abscissa of convergence $N$. If $m=1$ or $\lambda=(1,1)$, then
  it may be continued meromorphically to the whole complex plane;
  otherwise it has meromorphic continuation to $$\{s\in\C \mid
  \real(s)> N-2\}.$$ In any case, the continued function is
  holomorphic on the line $\{s\in\C \mid \real(s) = N\}$ except
  for a simple pole at $s=N$.
\item There exists a constant $K_{\lambda}\in\R_{>0}$ such that
$$\sum_{\nu\leq n}O_{ T_{\lambda}}(\nu) \sim K_{\lambda}
n^{N} \quad \textup{ as }n \rarr \infty.$$ 
\end{enumerate}
\end{thm}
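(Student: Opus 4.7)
The plan is to factor $\dd_{T_\lambda}(s)$ as a product of shifted Riemann zeta functions times a correction Euler product, meromorphically continue the latter, and apply a Tauberian theorem to obtain part~(2).

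First, by Theorem~\ref{thm:partition} and $\prod_p(1-p^{i-1-s})^{-1}=\zeta(s-i+1)$, I would write
\[
\dd_{T_\lambda}(s)=\prod_{i=0}^{N-1}\zeta(s-i)\cdot W_\lambda(s), \qquad W_\lambda(s):=\prod_{p\textup{ prime}}C_\lambda(p^{-1-s},p).
\]
The first factor has on $\{\real(s)=N\}$ a unique simple pole at $s=N$, inherited from $\zeta(s-N+1)$. To analyse $W_\lambda$, expand $C_\lambda(p^{-1-s},p)-1=\sum_{w} p^{(\maj(w)-\des(w))-s\des(w)}$ over non-trivial multipermutations $w\in S_\lambda$; the prime sum of each summand converges absolutely on $\{\real(s)>(\maj(w)-\des(w)+1)/\des(w)\}$. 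Since $\Des(w)\subseteq [N-1]$ has $\des(w)$ distinct elements, $\maj(w)\leq \des(w)N-\binom{\des(w)+1}{2}$, and this ratio is maximised at $\des(w)=1$, $\maj(w)=N-1$ with value $N-1$. Hence $W_\lambda$ is holomorphic and strictly positive on the real axis in $\{\real(s)>N-1\}$, which together with the previous remark establishes part~(1) on the line $\{\real(s)=N\}$.

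To extend $W_\lambda$ meromorphically into $\{\real(s)>N-2\}$, I would systematically extract shifted Riemann zeta factors, a standard technique for Euler products with polynomial local factors. Formally expanding $\log C_\lambda(p^{-1-s},p)=\sum_{n\geq 1}\tfrac{(-1)^{n+1}}{n}(C_\lambda(p^{-1-s},p)-1)^n$ yields a $\Z$-linear combination of monomials $p^{A-Bs}$ indexed by $(A,B)=\sum_k(\maj(w_{i_k})-\des(w_{i_k}),\des(w_{i_k}))$ from the multinomial expansion. Summed over primes, each monomial contributes $\log\zeta(Bs-A)$ plus a remainder holomorphic on $\{\real(s)>(A+\tfrac12)/B\}$. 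A weighted-average argument using the bound above shows $A/B\leq N-2$, with equality forcing every $w_{i_k}$ to satisfy $\des(w_{i_k})=1$ and $\maj(w_{i_k})=N-1$; integrality of $A,B$ then gives $A/B<N-2$ strictly in all other cases. Hence for each $\varepsilon>0$ only finitely many $(A,B)$ satisfy $(A+1)/B\geq N-2+\varepsilon$, and a finite extraction yields $W_\lambda(s)=\prod_{(A,B)\in F_\varepsilon}\zeta(Bs-A)^{n_{A,B}}\cdot U_\lambda(s;\varepsilon)$ with $U_\lambda(\cdot;\varepsilon)$ holomorphic on $\{\real(s)>N-2+\varepsilon\}$; letting $\varepsilon\downarrow 0$ completes the continuation. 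The two special cases are explicit: for $m=1$, $C_{(r)}\equiv 1$ and $\dd_{T_{(r)}}(s)=\prod_{i=0}^{r-1}\zeta(s-i)$, while for $\lambda=(1,1)$, $C_{(1,1)}(x,q)=1+xq$ and $1+p^{-s}=(1-p^{-2s})/(1-p^{-s})$ give $\dd_{T_{(1,1)}}(s)=\zeta(s)^2\zeta(s-1)/\zeta(2s)$; both are meromorphic on $\C$.

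Part~(2) then follows from the Wiener--Ikehara Tauberian theorem applied to $\dd_{T_\lambda}$ (nonnegative coefficients, abscissa $N$, unique simple pole at $s=N$ on its convergence line): $\sum_{\nu\leq n}\OO_{T_\lambda}(\nu)\sim K_\lambda n^N$ with $K_\lambda = N^{-1}\Res_{s=N}\dd_{T_\lambda}(s) = N^{-1}W_\lambda(N)\prod_{j=2}^N\zeta(j)>0$, positivity following from $W_\lambda(N)>0$ (each local factor is a sum of positive reals). The main technical obstacle lies in the meromorphic-continuation step: beyond the combinatorial bound on pole locations, one must verify that the residual Euler product $U_\lambda(\cdot;\varepsilon)$ is genuinely absolutely convergent on the enlarged half-plane, which requires bounding the log-expansion cross-terms by a convergent $\sum_p p^{-1-\delta}$-series.
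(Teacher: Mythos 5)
Your proposal follows essentially the same route as the paper: decompose $\dd_{T_\lambda}(s)=\prod_{i=0}^{N-1}\zeta(s-i)\cdot\prod_p C_\lambda(p^{-1-s},p)$, compute that the correction Euler product has abscissa of convergence $N-1$ and admits continuation to $\real(s)>N-2$ (with both extremal bounds coming from words with $\Des(w)=\{N-1\}$), dispose of the cases $m=1$ and $\lambda=(1,1)$ by explicit formulas, and conclude part (2) via a Tauberian theorem.

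The one substantive difference is the handling of the continuation of $W_\lambda(s)=\prod_p C_\lambda(p^{-1-s},p)$. You attempt to re-derive the zeta-extraction machinery from scratch via the logarithmic expansion, and you candidly flag the remaining gap — absolute convergence of the residual Euler product $U_\lambda(\cdot;\varepsilon)$ on the enlarged half-plane. The paper sidesteps this entirely by citing \cite[Lemmas~5.4 and~5.5]{duSWoodward/08}, which are exactly the general statement you are trying to reconstruct: for a finite local polynomial $1+\sum_{(i,k)\in I}p^{i-ks}$, the Euler product converges on $\real(s)>\alpha=\max(i+1)/k$ and continues meromorphically to $\real(s)>\beta=\max i/k$. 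Your bound $\maj(w)\le\des(w)N-\binom{\des(w)+1}{2}$ and the resulting identification $\alpha=N-1$, $\beta=N-2$ match the paper's (the paper further records $|\{w:\Des(w)=\{N-1\}\}|=m-1$, used later for the natural-boundary argument). So you have the right numerics; rather than re-prove the cited lemmas by hand, you should invoke them to close the gap, or else carry the cross-term estimates through in the log-expansion. The Tauberian step (Wiener--Ikehara vs.\ the paper's \cite[Theorem~4.20]{duSG/00}) and your formula for $K_\lambda=N^{-1}W_\lambda(N)\prod_{j=2}^N\zeta(j)$ are both correct.
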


\begin{proof}
  Recall that, for all $b\in\N_0$, the translate $\zeta(s-b)$ of
  Riemann's zeta function converges for $\real(s)>b+1$ and may be
  continued meromorphically to the whole complex plane. The continued
  function is holomorphic on the line $\{s\in\C \mid \real(s) = b+1
  \}$ except for a simple pole at $s=b+1$. This establishes all claims
  in (1) if $m=1$ or $\lambda=(1,1)$, as $\dd_{ T_r}(s) =
  \zeta_{\Z^r}(s)$ and $\dd_{ T_{(1,1)}}(s) =
  \frac{\zeta(s)^2\zeta(s-1)}{\zeta(2s)}$.

  Assume thus that $m \geq 2$ and $\lambda\neq (1,1)$ and recall the
  expression \eqref{equ:partition} for $\dd_{T_\lambda}(s)$.  The
  product $\prod_{p \textrm{ prime}}{C_{\lambda}(p^{-1-s},p)}$
  has abscissa of convergence $N-1$ and may be meromorphically
  continued to $\{s\in\C \mid \real(s) > N-2\}$.  Indeed, an
  Euler product of the form $$\prod_{p \textrm{ prime}} \left(1 +
    \sum_{(i,k)\in I}p^{i-ks}\right),$$ where $I \subset \N_0 \times
  \N$ is a finite (multi-)set, converges on $\left\{s\in\C \mid
    \real(s) > \alpha \right\}$, where $$\alpha =
  \max\left\{\frac{i+1}{k} \mid (i,k)\in I \right\},$$ and has a
  meromorphic continuation to $\left\{s\in\C \mid \real(s) > \beta
  \right\}$, where $$\beta = \max\left\{\frac{i}{k} \mid (i,k)\in
    I\right\};$$ see \cite[Lemmas~5.4 and 5.5]{duSWoodward/08}.  It
  follows from inspection of the Euler product
\begin{equation}\label{ep}
  \prod_{p \textrm{ prime}}{C_{\lambda}(p^{-1-s},p)} =\prod_{p \textrm{
      prime}}\left({\sum_{w\in S_{\lambda}}
      \prod_{j\in\Des(w)}p^{j-1-s}}\right)
\end{equation}
that the relevant maxima $ \alpha = {N-1}$ resp.\ $\beta =
N-2$ are both attained at the elements $w\in S_{\lambda}$
with $\Des(w)=\{N-1\}$. We note that
\begin{equation}\label{eq:nmax}
|\{w \in S_\lambda \mid \Des(w)=\{N-1\}\}|=m-1.
\end{equation}

As $$\prod_{p \textup{ prime}}
\frac{1}{\prod_{i=1}^{N}(1-p^{i-1-s})}= \prod_{i=1}^{N}
\zeta(s-i+1)$$ has abscissa of convergence $N > {\alpha}$,
this concludes the proof of~(1).

Statement (2) follows from~(1), for instance via the Tauberian theorem
\cite[Theorem~4.20]{duSG/00}.\qedhere
\end{proof}

If $m > 1$ and $\lambda\neq (1,1)$, then the meromorphic continuation
to $\beta = N-2$ is often{---}and conjecturally
always{---}best possible, as we now prove.
\begin{thm}\label{thm:nat.bound}
Assume that $\lambda\neq (1,1)$ and that either
\begin{itemize}
\item[(i)] $m>2$ or
\item[(ii)] $m=2$ and $\lambda_1=\lambda_2$ or
\item[(iii)]  $m=2$, $\lambda_1>\lambda_2$, and
  Conjecture~~\ref{con:nonzero} holds.
\end{itemize}
Then the orbit Dirichlet series $\dd_{T_{\lambda}}(s)$ has
  a natural boundary at $$\{s\in\C \mid \real(s)=N-2 \}.$$
\end{thm}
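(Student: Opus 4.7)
By Theorem~\ref{thm:partition}, $\dd_{T_\lambda}(s) = \prod_{i=1}^N \zeta(s-i+1) \cdot U(s)$ with $U(s) = \prod_p C_\lambda(p^{-1-s}, p)$. Since the zeta translates are meromorphic on $\C$, it suffices to prove $U(s)$ has a natural boundary at $\real(s) = N - 2$. My approach is to apply Estermann's theorem and its generalizations for Euler products of polynomial type (in the spirit of~\cite{duSWoodward/08}). Viewing $C_\lambda(p^{-1-s}, p) = \sum_w p^{\maj(w) - \des(w)}(p^{-s})^{\des(w)}$, the lattice points $(j, k) = (\maj(w) - \des(w), \des(w))$ govern the Newton-polygon analysis. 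I would first note that among lattice points of positive multiplicity, the unique one with $j/k = N - 2$ is $(N-2, 1)$, of multiplicity $n_{N-1} = m - 1$ (by~\eqref{eq:nmax}): for $\des(w) = k \geq 2$, the bound $\maj(w) \leq k(N-1) - \binom{k}{2}$ yields $j/k < N - 2$.

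In case (i), $m > 2$ gives $n_{N-1} \geq 2$, and the top-edge Euler factor $\prod_p(1 + (m-1) p^{N-2-s})$ already has a natural boundary at $\real(s) = N - 2$ by the classical Estermann theorem (since $|m-1| \geq 2$ means $1 + (m-1) X$ is not a product of cyclotomics). A standard perturbation argument handles the lower-order corrections in $U(s)$. In cases (ii) and (iii) with $m = 2$, by contrast, $n_{N-1} = 1$ and the top-edge Euler factor $\prod_p(1 + p^{N-2-s}) = \zeta(s - N + 2)/\zeta(2s - 2N + 4)$ extends meromorphically to $\C$. I would therefore pass to the factorization $C_\lambda(X, q) = \prod_{i=1}^{\lambda_2}(1 - \gamma_i(q) X)$ in $\overline{\Q(q)}[X]$ and analyze the factors $\prod_p(1 - \gamma_i(p)p^{-1-s})$. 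The Newton polygon of $C_\lambda(X, q)$ in the $(k, \deg_q c_k)$-plane, where $c_k(q) = \sum_{w: \des(w) = k} q^{\maj(w)}$, has top slope $N - 1$ (as $c_1(q)$ contains a $q^{N-1}$-term), so precisely one $\gamma_i$ has leading $q$-term of degree $N - 1$. This $\gamma_i$ equals $\pm q^{N-1}$ exactly iff $C_\lambda$ has $(1 + X q^{N-1})$ as a factor, which forces $C_\lambda(-1, 1) = 0$. Under hypothesis (ii) with $\lambda_1 = \lambda_2$ even, this is precluded by \cite[Proposition~2.4]{RSW/03}; under (ii) with $\lambda_1 = \lambda_2 = r$ odd, Proposition~\ref{pro:shape} shows $C_\lambda = (1 + xq^r)C'_{r,2}$ with $r < N - 1$ and $C'_{r,2}$ having no unitary factor, so the $\gamma_i$ of maximal growth comes from $C'_{r,2}$ and is not $\pm q^{N-1}$; under (iii), Conjecture~\ref{con:nonzero} excludes it. Hence $\gamma_i(q)$ is a genuinely algebraic, non-monomial function of $q$, and the Estermann--Dahlquist criterion will force a natural boundary at $\real(s) = (N-1) - 1 = N - 2$.

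The principal obstacle lies in executing this last step for $m = 2$: one must lift the Estermann--Dahlquist framework beyond polynomial-coefficient factors to the algebraic functions $\gamma_i(q)$, and verify that no cancellation among the $\lambda_2$ factors $\prod_p(1 - \gamma_j(p) p^{-1-s})$ shifts the natural boundary strictly left of $\real(s) = N - 2$; this is precisely where the ``non-cyclotomicity'' of $C_\lambda$ encoded in the absence of the factor $(1 + Xq^{N-1})$ plays its essential role.
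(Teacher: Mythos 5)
Your setup and your case~(i) argument are in the same spirit as the paper's: the paper also views the Euler factor as $W^\lambda(X,Y) = C_\lambda(X^{-1}Y,X) = \sum_w X^{\maj(w)-\des(w)}Y^{\des(w)}$, extracts the ``top-edge'' factor $1+(m-1)X^{N-2}Y$ from the Newton polygon, and for $m>2$ concludes via the type-I classification of~\cite[Section~5.2]{duSWoodward/08} (the precise incarnation of your ``Estermann plus perturbation'' heuristic). One thing the paper does that you omit: it checks that dividing out any unitary factors of $W^\lambda$ leaves the first ghost factor $1+(m-1)X^\beta Y$ unchanged (because a unitary factor's Newton polygon is a segment of slope strictly greater than $1/\beta$), so that~\cite[Theorem~5.6]{duSWoodward/08} applies.

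For cases~(ii) and~(iii) your route diverges from the paper's and, as you yourself acknowledge, is not executed. You propose to split $C_\lambda(X,q)=\prod_i(1-\gamma_i(q)X)$ over $\overline{\Q(q)}$ and argue factor by factor; but the $\gamma_i$ are in general genuine algebraic (non-polynomial) functions of $q$, so this takes you outside the Estermann--Dahlquist and du~Sautoy--Woodward frameworks, which require polynomial Euler factors $f(p,p^{-s})$. You would both need a theorem controlling Euler products $\prod_p(1-\gamma_i(p)p^{-1-s})$ with algebraic $\gamma_i$, and a separate argument ruling out cancellation of singularities among the $\lambda_2$ factors; neither is supplied. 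The paper instead stays inside the polynomial framework: with $\widetilde W_1^\lambda(U)=1+U$ cyclotomic, it shows $W^\lambda$ is of \emph{type~II} by verifying Hypotheses~1 and~2 of~\cite[p.~134]{duSWoodward/08} for~\cite[Corollary~5.15]{duSWoodward/08}. Hypothesis~1 is the simplicity of the root $\omega=-1$ of $A(U)=1+U$; Hypothesis~2 reduces to $B_\gamma(\omega)<0$, and a short combinatorial count of words with $\Des(w)=\{N-2\}$ or $\Des(w)=\{N-1\}$ (i.e.\ $\des(w)=1$, $\maj(w)=N-2$) shows $\gamma=1$ with $B_1(U)=2U$ (if $\lambda_2>1$) or $B_1(U)=U$ (if $\lambda_2=1$), so $B_1(-1)<0$. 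Only \emph{after} this does the role of Conjecture~\ref{con:nonzero} and Proposition~\ref{pro:shape} enter, namely to control the unitary factors of $W^\lambda$ so that~\cite[Theorem~5.6]{duSWoodward/08} can be invoked. So the missing idea in your write-up is precisely the type-II criterion replacing the algebraic-factorization heuristic; without it, cases~(ii) and~(iii) remain a gap.
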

\begin{proof}
  We keep the notation as in Theorem~\ref{analytic} and set
  $${W}^\lambda (X,Y) = C_\lambda(X^{-1}Y,X) = \sum_{(i,k)\in
    { I}_{\lambda}}c_{i,k}X^iY^k \in \Z [X,Y]$$ for suitable ${
    I}_{\lambda} \subseteq \N_0^2$ and $c_{i,k}\in\N$. The Euler
  product \eqref{ep} then reads
\begin{equation}\label{ep.W}
 \prod_{p \textup{ prime}}{W}^{\lambda}(p,p^{-s}).
\end{equation}
To prove that under any of the assumptions (i)-(iii), the Euler
product \eqref{ep.W} has a natural boundary at $ \beta=N-2$ we
consider the \emph{ghost polynomial} associated to ${W}^{\lambda}$ and
prove that ${W}^{\lambda}$ is a polynomial of \emph{type
  I} (in case (i)) or \emph{type II} (in cases
(ii) and (iii)) in the terminology of
\cite[Section~5.2]{duSWoodward/08}.

We claim that the first factor of the ghost polynomial of
${W}^{\lambda}(X,Y)$ is, in notation close to the one used
in~\cite[Section~5.2]{duSWoodward/08},
\begin{equation}\label{equ:ghost}
 \widetilde{W}_1 ^{\lambda}(X,Y)=\sum_{(i,k)
  \in \mathfrak{l}_1 \cap{I}_{\lambda}} c_{i,k} X^i Y^k= 1+(m-1)X^{ \beta}Y.
\end{equation}
Here, $\mathfrak{l}_1$ is the line in $\R^2$ through $(0,0)$ and $(
\beta,1)$. It is characterized by the fact that its gradient $1/ 
\beta$ is minimal among the lines in $\R^2$ passing through $(0,0)$
and the points $(i,k)\in {I}_\lambda\setminus\{(0,0)\}$. Moreover,
$\mathfrak{l}_1 \cap {I}_{\lambda}=\{(0,0),(\beta,1)\}$ and
$c_{ \beta,1}=m-1$ (cf.\ \eqref{eq:nmax}), which
proves~\eqref{equ:ghost}. Setting $U = X^{\beta}Y$, we
obtain $$\widetilde{{W}}_1^{\lambda}(U) = 1 + (m-1)U\in\Z[U].$$

If $m > 2$, then $\widetilde {W}_1^{\lambda}(U)$ is not cyclotomic,
whence ${W}^{\lambda}(X,Y)$ is a polynomial of type I in the
parlance of~\cite[p.~127]{duSWoodward/08}. Without loss of generality
we may divide ${W}^{\lambda}$ by any unitary factors it may have;
cf.\ Conjecture~\ref{con:master}. Indeed, if ${W}^\lambda = f
{V}^\lambda$ for $f\in\Z[X,Y]$ unitary, then the Newton polygon of
${W}^\lambda$ is the Minkowski sum of the Newton polygons of $f$ and
${V}^\lambda$. The former, however, is a segment of a line
in~$\R^2$. As $\wt{W}_1^\lambda$ does not have a unitary factor, the
slope of this line is strictly larger than $1/{\beta}$, whence
$\wt{W}_1^\lambda = \wt{ V}_1^\lambda$, i.e. the first factors of
the ghosts of ${W^\lambda}$ and ${V^\lambda}$ coincide.

Assuming thus, as we may, that ${W}^\lambda$ has no unitary
factors, \cite[Theorem~5.6]{duSWoodward/08} yields that $ \beta $ is
a natural boundary for \eqref{ep.W} and thus for $\dd_{
  T_{\lambda}}(s)$. This concludes the proof in case~(i).

Turning to cases (ii) and (iii) we now assume that $m=2$. Hence
$\widetilde {W}_1^{\lambda}(U) =1+U$ \emph{is} cyclotomic. In
particular, ${W}^{\lambda}(X,Y)$ is not of type I.  We claim that it
is a polynomial of type II in the sense
of~\cite[p.~127]{duSWoodward/08}. To prove this, we check that the
hypotheses of \cite[Corollary~5.15]{duSWoodward/08} are satisfied. To
this end, we verify that ${W}^{\lambda}(X,Y)$ is such that Hypotheses
1 and 2 defined on \cite[p.~134]{duSWoodward/08} are satisfiable. The
polynomial $A(U)=1+\sum_{\frac{n_k}{k}={\beta}} c_{{n_k},k} U^k=1+U$
(cf.\ \cite[p.~134]{duSWoodward/08}) obviously has a unique root
$\omega=-1$. It is simple, so in particular satisfies Hypothesis~1.

Hypothesis~2 is equivalent to $\real
\left(-\frac{B_\gamma (\omega)}{\omega A' (\omega)}\right)<0$, hence to
$B_\gamma (\omega)<0$, where $$\gamma:= \min\{n\in \N_0 \mid
B_n(\omega)\neq 0\}$$ and, for $n \in \N_0$ and $(n_j,j)\in {
  I}_{\lambda}$ such that $n_j/j={\beta}$ and $j$ is minimal with this
property,
\begin{equation*}
B_n(U)=\sum_{n_j k - ij=n} c_{i,k} U^k=\sum_{\beta k - i=n} c_{i,k}
U^k;
\end{equation*} 
cf.\ \cite[(5.12)]{duSWoodward/08}. Note that
$B_0(U)=A(U)=1+U$.

Recall that
$${W}^{\lambda}(X,Y) = \sum_{w\in S_{\lambda}} X^{\maj(w) -
  \des(w)} Y^{\des(w)}= \sum_{(i,k)\in {I}_\lambda}c_{i,k} X^i
Y^k.$$ For $(i,k)\in {I}_{\lambda}$, we thus have $c_{i,k}=|\{w
\in S_{\lambda}\mid k=\des(w),\,i=\maj(w)-k\}|$. 

We claim that $\gamma=1$.  If $(i,k)$ satisfies
\begin{equation}\label{cdt2}
\beta k-i=1,
\end{equation}
then clearly $k\neq 0$. If $k=2$, then \eqref{cdt2} necessitates
$i=2N-5$, that is $\maj(w)=2N-3$. But there is no element $w \in
S_{\lambda}$ such that $\des(w)=2$ and $\maj(w)=2N-3$. Indeed, such an
element would need to have descents at the consecutive positions $N-2$
and $N-1$ (as $\maj(w)=2N-3=(N-1)+(N-2)$), which is clearly impossible
for a word in
$\{\underbrace{\bfone,\dots,\bfone}_{\lambda_1},\underbrace{\bftwo,\dots,\bftwo}_{\lambda_2}\}$. A
similar argument excludes pairs $(i,k)$ that satisfy \eqref{cdt2} and
for which~$k>2$.  To determine $B_1(U)$ we thus need to
determine $$c_{ \beta-1,1}=|\{w \in S_{\lambda} \mid
\des(w)=1,\,\maj(w)=N-2\}|,$$ i.e.\ to enumerate the multiset
permutations with no descent in the first $N-3$ positions and ending
in $\ldots \bftwo\bfone\bftwo$ or $\ldots \bftwo\bfone\bfone$. If
$\lambda_2>1$, then there are exactly two such words; if
$\lambda_2=1$, then only the second option occurs. So $B_{1}(U)=2U$
resp.\ $B_{1}(U)=U$. In any case, $\gamma=1$ and
$B_\gamma(\omega)=-2<0$ resp.\ $B_\gamma(\omega)=-1<0$. Hence
Hypothesis~2 is satisfied.

Since Hypotheses 1 and 2 are satisfied and $1=\gamma \geq j=1$,
\cite[Corollary~5.15]{duSWoodward/08} implies that ${
  W}^{\lambda}(X,Y)$ is of type II. Thus in case (ii) for
$\lambda_1=\lambda_2$ odd, and in case (iii), as ${W}^{\lambda}(X,Y)$
has no unitary factors, \cite[Theorem~5.6]{duSWoodward/08} yields that
$\beta$ is a natural boundary for \eqref{ep.W} and thus
for~$\dd_{T_{\lambda}}(s)$.  In case (ii) for $\lambda_1=\lambda_2$
even, Proposition~\ref{pro:shape} asserts that a unique unitary factor
exists: ${W}^{\lambda}(X,Y)=(1+X^{\lambda_1 -1}Y){{W}'} ^\lambda(X,Y)$
for some ${W'}^\lambda \in \Z [X,Y]$. But $ \beta=N-2>\lambda_1 -1$,
so the minimal gradient for ${W'}^\lambda(X,Y)$ is still $\beta$. Thus
also in this case \cite[Theorem~5.6]{duSWoodward/08} implies that
$\beta$ is a natural boundary.
\end{proof}

\section{Connection with Igusa functions and the special case
  $\lambda=(1^m)$}\label{sec:one}
Taking $\lambda=(1^m)$ corresponds to considering the $m$-th power of
a map $T=T_1$ whose orbit Dirichlet series $\dd_T(s)$ is the Riemann
zeta function $\zeta(s)$. In this case, Theorem~\ref{thm:partition}
reads
\begin{equation*}
\dd_{ T^{\times m}}(s) = \prod_{p \textup{ prime}}
\frac{C_m(p^{-1-s},p)}{\prod_{i=1}^m(1-p^{i-1-s})}=\prod_{p \textup{ prime}}
\frac{\sum_{w\in S_m} {p^{(-1-s)\des(w)+\maj(w)}}}{\prod_{i=1}^m(1-p^{i-1-s})}.
\end{equation*}
where $C_m(x,q)=C_{1,m}(x,q)$ is Carlitz's $q$-Eulerian polynomial.

More generally one may, for $a\in\R_{\geq 0}$, consider a map $_a T$
such that $\dd _ {_a T}(s)=\zeta(s-a)$.  Then $$\OO_{_a T}(p^k)=p^{ak}
\quad \text{ and }\quad \FF_{_a T} (p^k)=\sum_{j=0}^k p^j
p^{aj}=\binom{1+k}{1}_{p^{a+1}}.$$

The orbit Dirichlet series of the $m$-th power of $_a T$ is thus MacMahon's generating series~\eqref{eq:mmpart} for $\lambda=(1^m)$ and $(x,q)=(p^{-1-s},p^{a+1})$:
\begin{equation}\label{eq:Ta}
\dd_{ _a T^{\times m}}(s) = \prod_{p \textup{ prime}}
\frac{C_m(p^{-1-s},p^{a+1})}{\prod_{i=1}^m(1-p^{(a+1)i-1-s})}=\prod_{p \textup{ prime}}
\frac{\sum_{w\in S_m} {p^{(-1-s)\des(w)+(a+1)\maj(w)}}}{\prod_{i=1}^m(1-p^{(a+1)i-1-s})}.
\end{equation}

A formula for the local factors $\dd_{_a T ^{\times m},p}(s)$ of
$\dd_{_a T ^{\times m}}(s)$ appears in
\cite[p.~41]{PakapongpunWard/14}, where they are called $E_p (s)$, and
suffers from a transcript error in the definition of the
expression~$A_b$. Moreover, no combinatorial interpretation is
given. \cite[Theorem~4.1]{PakapongpunWard/14} is
Theorem~\ref{thm:nat.bound} in the special case~$\lambda=(1,1,1)$.

Each factor of the Euler product~\eqref{eq:Ta} is an instance of an Igusa
function:
\begin{align*}
\dd_{_a T^{\times m},p}(s)&=
  \frac{C_m(p^{-1-s},p^{a+1})}{\prod_{i=1}^m(1-p^{(a+1)i-1-s})} =\frac{\sum_{w\in
      S_m} \prod_{j\in\Des(w)}p^{(a+1)j-1-s}}{\prod_{i=1}^m(1-p^{(a+1)i-1-s})}\nonumber\\
  &=\frac{1}{1-p^{(a+1)m-1-s}}\sum_{I \subseteq [m-1]}\binom{m}{I}\prod_{i\in
    I}\frac{p^{(a+1)i-1-s}}{1-p^{(a+1)i-1-s}}\in\Q(p,p^{-s}).
\end{align*}
In the terminology of \cite[Definition~2.5]{SV1/15} it would be called
$I_m(1;(p^{(a+1)i-1-s})_{i=1}^m)$ and so \eqref{equ:fun} in
Theorem~\ref{cor:funeq} follows in this case
from~\cite[Proposition~4.2]{SV1/15}.

In the case of a general partition, we are not aware of a simple
expression of the local factors of the orbit Dirichlet series of the
product of such ``shifted maps''. Turning back to the case $a=0$ and
general partition $\lambda$, the local factors of
\eqref{equ:partition} may be rewritten as
\begin{equation*}\label{equ:C=I}
  \dd_{T_{\lambda},p}(s)
  =\frac{1}{1-p^{N-1-s}}\sum_{I \subseteq [N-1]}\nu_{\lambda,I}
  \prod_{i\in I}\frac{p^{i-1-s}}{1-p^{i-1-s}}\in\Q(p,p^{-s})
\end{equation*}
where $\nu_{\lambda,I}=|\{w \in S_{\lambda}\mid \Des(w)\subseteq
I\}|$.  We are not aware of a simple expression, say in terms of
multinomial coefficients, for $\nu_{\lambda,I}$ if~$\lambda$ is not of
the form~$(1^m)$.

\section{Reduced orbit Dirichlet series: setting $p=1$}\label{sec:red}
Viewing the Euler factors of \eqref{equ:partition} as bivariate
rational functions in $p$ and $t=p^{-s}$, one may evaluate them at
$p=1$ whilst leaving $t$ as an independent variable. Motivated by the
notion of \emph{reduced zeta functions of Lie algebras} introduced
in~\cite{Evseev/09} we thus define the \emph{reduced orbit Dirichlet
  series} $$\dd_{T_{\lambda},\red}(t) :=
\frac{C_{\lambda}(t,1)}{(1-t)^{N}}\in\Q(t).$$ It seems remarkable that
for $\lambda=(1^m)$ the reduced orbit Dirichlet series
$\dd_{T_1^{\times m},\red}(t)$ is the Hilbert series of the
Stanley-Reisner ring of a simplicial complex. Indeed, let $k$ be any
field, write $\sd(\Delta_{m-1})$ for the barycentric subdivision of
the $(m-1)$-simplex $\Delta_{m-1}${---}or, equivalently, the
Coxeter complex of type $A_{m-1}${---}, with Stanley-Reisner (or face) ring
$k[\sd(\Delta_{m-1})]$; see, for instance, \cite[Ch.~III,
  Sec.~4]{Stanley/96}. The fact that the $m$-th Eulerian polynomial
(cf.\ \eqref{equ:eulerian}) is the generating function of the
$h$-vector of $\sd(\Delta_{m-1})$ is reflected in the following fact.

\begin{pro}
$$\dd_{T_1^{\times m},\red}(t) = \frac{A_m(t)/t}{(1-t)^m} =
  \Hilb(k[\sd(\Delta_{m-1})],t).$$
\end{pro}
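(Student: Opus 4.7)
The plan is to verify the two equalities in turn; neither presents a real difficulty, as both amount to invoking facts that are either already stated in the paper or standard in the theory of Stanley-Reisner rings.

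For the first equality, I would simply unwind the definition. Setting $\lambda=(1^m)$ (so that $N=m$) in the definition of the reduced orbit Dirichlet series gives $\dd_{T_1^{\times m},\red}(t) = C_m(t,1)/(1-t)^m$, and the identity $C_m(t,1) = A_m(t)/t$ is already recorded in~\eqref{equ:eulerian}.

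For the second equality, my plan is to apply the standard Hilbert-series formula for Stanley-Reisner rings: if $\Delta$ is a $(d-1)$-dimensional simplicial complex on a finite vertex set, then $\Hilb(k[\Delta],t) = h_\Delta(t)/(1-t)^d$, where $h_\Delta(t)$ denotes the $h$-polynomial of $\Delta$; cf.~\cite[Ch.~II]{Stanley/96}. Since $\sd(\Delta_{m-1})$ is of dimension $m-1$, the denominator is automatically $(1-t)^m$, so the whole task reduces to identifying
$$h_{\sd(\Delta_{m-1})}(t) = \sum_{w \in S_m} t^{\des(w)} = A_m(t)/t.$$

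This identification is the classical computation of the $h$-vector of the barycentric subdivision of a simplex, carried out in \cite[Ch.~III, \S 4]{Stanley/96}: the facets of $\sd(\Delta_{m-1})$ correspond to maximal chains of nonempty proper subsets of $[m]$ and thereby, by reading off the order in which singletons are adjoined, to the elements of $S_m$; a standard shelling then shows that the restriction face of the facet labelled by $w$ has cardinality $\des(w)$, whence the $h$-polynomial is the descent-generating polynomial on $S_m$. The only point requiring any care---the ``obstacle'', such as it is---is to confirm that the descent convention used in this shelling matches the one used in~\eqref{equ:eulerian} to define $A_m$; once this bookkeeping is done, substituting the displayed formula into the Hilbert-series expression completes the proof.
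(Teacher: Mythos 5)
Your proposal is correct and follows the same route the paper implicitly takes: the paper itself treats the proposition as an immediate consequence of the definition of $\dd_{T_{\lambda},\red}(t)$, the identity~\eqref{equ:eulerian}, and the classical fact (cited to~\cite[Ch.~III, Sec.~4]{Stanley/96}) that $A_m(t)/t$ is the $h$-polynomial of $\sd(\Delta_{m-1})$. Your added detail about the shelling and the descent convention is sound bookkeeping but does not change the argument.
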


Similarly, we observe that for $\lambda=(r,r)$, the polynomial
$C_{r,2}(t,1)$ may be viewed as the $h$-vector of the $r$-dimensional
type-B simplicial associahedron $Q_n^{\textup{B}}$; cf.\
\cite[Corollary~1]{Simion/03}.

\begin{acknowledgements}
  We are grateful to Tom Ward, who pointed us to
  \cite{PakapongpunWard/14} and suggested to generalize the set-up
  there, and to Michael Baake, who introduced us to Ward. Both Baake
  and Ward made valuable comments about earlier drafts of this
  paper. We thank Laurent Habsieger, Christian Krattenthaler, Tobias
  Rossmann, and Dennis Stanton for helpful remarks. We were supported
  by the German-Israeli Foundation for Scientific Research and
  Development, through grant no.~1246.

  This is a pre-print of an article published in \emph{Monatshefte
    f\"ur Mathematik}. The final authenticated version is available
  online at: \url{https://doi.org/10.1007/s00605-017-1128-9}
\end{acknowledgements}

\def\cprime{$'$}
\providecommand{\bysame}{\leavevmode\hbox to3em{\hrulefill}\thinspace}
\providecommand{\MR}{\relax\ifhmode\unskip\space\fi MR }
\providecommand{\MRhref}[2]{%
  \href{http://www.ams.org/mathscinet-getitem?mr=#1}{#2}
}
\providecommand{\href}[2]{#2}

\end{document}